\newcommand{\Desc}[2]{\State \makebox[9em][l]{#1}#2}
\definecolor{grey}{rgb}{0.5, 0.5, 0.49}
\newcommand{\mcaption}[1]{\caption{{\em\small #1}}}
\newcommand{\R}[1]{\mathbb{R}^{#1}}
\newcommand{\mat}[1]{\begin{bmatrix}#1\end{bmatrix}}
\newcommand{\tenprod}[2]{{_{#1}{\times}_{#2}}}
\definecolor{rackhamgreen}{HTML}{75988d}
\definecolor{tappanred}{HTML}{9a3324}
\definecolor{wavefieldgreen}{HTML}{a5a508}
\definecolor{matthaeiviolet}{HTML}{577294}
\newcommand{\newdim}{n_{d+1}}
\newcommand{\reals}{\mathbb{R}}
\DeclareMathOperator*{\argmin}{argmin}
\title{An Incremental Tensor Train Decomposition Algorithm} 
\author{Doruk Aksoy\thanks{Department of Aerospace Engineering, University of Michigan, Ann Arbor, MI, USA} (\email{doruk@umich.edu}, \email{goroda@umich.edu}) \and David J. Gorsich\thanks{Ground Vehicle Systems Center, U.S. Army, Warren, MI, USA} (\email{david.j.gorsich.civ@army.mil}) \and
	 Shravan Veerapaneni\thanks{Department of Mathematics, University of Michigan, Ann Arbor, MI, USA} (\email{shravan@umich.edu})\\ DISTRIBUTION A. Approved for public release; distribution unlimited. OPSEC $\# 7057$
	\and Alex A. Gorodetsky\footnotemark[1]
}
\begin{document}
	\maketitle
	\begin{abstract}
		We present a new algorithm for incrementally updating the tensor train decomposition of a stream of tensor data. This new algorithm, called the {\em tensor train incremental core expansion} (TT-ICE) improves upon the current state-of-the-art algorithms for compressing in tensor train format by developing a new adaptive approach that incurs significantly slower rank growth and guarantees compression accuracy. This capability is achieved by limiting the number of new vectors appended to the TT-cores of an existing accumulation tensor after each data increment. These vectors represent directions orthogonal to the span of existing cores and are limited to those needed to represent a newly arrived tensor to a target accuracy. We provide two versions of the algorithm: TT-ICE and TT-ICE accelerated with heuristics (TT-ICE$^*$). We provide a proof of correctness for TT-ICE and empirically demonstrate the performance of the algorithms in compressing large-scale video and scientific simulation datasets. Compared to existing approaches that also use rank adaptation, TT-ICE$^*$ achieves $57\times$ higher compression and up to $95\%$ reduction in computational time. 
	\end{abstract}

 \begin{keywords}
Tensor decompositions, data compression, streaming data, low-rank factorizations
\end{keywords}

% REQUIRED
\begin{AMS}
	15A23, 65-04, 15A69, 65F55 
\end{AMS}

	\section{Introduction}\label{sec:intro}

	Tensors provide a representation for multivariate or high-dimensional data in many problems, including RGB images~\cite{Jia2014}, social networks~\cite{Sizov2010,Nakatsuji2017}, multisensory experiments~\cite{Anaissi2020}, neuroscience~\cite{Martinez-Montes2004,Miwakeichi2004}, and finance~\cite{Giudici2019}. As the size of the tensor increases, computational processes become harder, if not impossible, due to the curse of dimensionality --- both storage and computational processing requirements can grow exponentially with the number of dimensions. Tensor decompositions that rely on low-rank approximations provide a solution to mitigate the curse of dimensionality to help reduce computational demands.

In many applications, data becomes available incrementally, e.g., from a sequence of experiments~\cite{Anaissi2020},  a video (which can be seen as a sequence of RGB images)~\cite{Jia2014}, in IoT applications~\cite{cook2019anomaly}, or from simulations of physical systems~\cite{Alben2019}. As a result, batch tensor compression approaches that wait for all data to be collected may not be affordable because the data storage may exceed capacity. Furthermore, the total number of data points might not be known {\em a priori}. In this setting, it would be inefficient to reconstruct and decompose the tensor every time a new data point arrives. Incremental algorithms that update the decomposition without reconstructing the compressed tensor are needed.
 
 Existing works include incremental tensor decomposition algorithms in Canonical Polyadic (CP)  format~\cite{Anaissi2020,Du2018,Vandecappelle2017,Smith2018}, Tucker format~\cite{Yu2015,Sobral2014,Baker2012}, and tensor train (TT)~\cite{Oseledets2011} format~\cite{Thanh2021,Liu2018,Wang2021}. In this work, we focus on the TT-format. Unlike the CP-format, the TT-format avoids the NP-hard problem of computing the canonical rank \cite{Hastad1989} and offers controlled compression algorithms. Also, the storage of a tensor in TT-format scales linearly in the number of dimensions $d$, whereas in Tucker format, it scales exponentially in $d$. Furthermore, the TT-format offers an efficient way to execute
 basic linear algebra operations without reconstructing the full tensor, and therefore it is beneficial for computing with large-scale data.
 
 In this work, we consider a \textit{$d$-way tensor} $\mathcal{Y}\in\R{n_1\times\cdots\times n_d}$ to be a multidimensional array. 
 A \textit{tensor stream} (or alternatively, \textit{stream of tensors}) is a sequence of $d$-way tensors $\mathcal{Y}^1,\mathcal{Y}^2$,\dots, where each element in the sequence $\mathcal{Y}^k\in \R{n_1\times\cdots\times n_d}$ is a $d$-dimensional tensor\footnote{Despite the conservative definition here, we generalize the notion of tensor stream to support batches of tensors in \Cref{sec:methodology}.}. A finite stream of tensors is called an \textit{accumulation} and can be viewed as a $(d+1)$-way tensor $\mathcal{X}^k\in\R{n_1\times\cdots\times n_d\times \newdim^{k}}$ by concatenating the tensors in the stream along the last dimension. The problem we consider can then be summarized as follows.

 \begin{problem}\label{prb} 
 Construct a scheme to update the approximation $\hat{\mathcal{X}}^{k}$ of the accumulation tensor $\mathcal{X}^{k}$ after every increment $k$ in TT-format. The constructed scheme should maintain the guaranteed bounds on the error $\|\mathcal{X}^k-\hat{\mathcal{X}}^k\|_{F}$ for all $k$. Furthermore, this approximate accumulation should represent all tensor increments $\mathcal{Y}^{\ell}$ with error $\|\mathcal{Y}^{\ell}-\hat{\mathcal{Y}}^{\ell}\|_{F}\leq\varepsilon_{des}\|\mathcal{Y}^{\ell}\|_{F}$ for any $\ell\leq k$, where $\hat{\mathcal{Y}}^{\ell}$ can be extracted from $\hat{\mathcal{X}}^{k}$.
 \end{problem}
 
There are a few recent works for computing an incremental TT in this setting~\cite{Thanh2021,Liu2018}; however, they suffer from drawbacks that limit their scalability. For example, the TT-FOA~\cite{Thanh2021} algorithm considers a fixed-rank approximation and uses an optimization procedure that cannot guarantee bounded errors on {\it all} tensors seen in the stream. The ITTD algorithm~\cite{Liu2018} (also see \cite{de2022tensor}), on the other hand, can guarantee a prescribed error tolerance and enables rank adaptation. However, this algorithm has computational challenges, which we show are due to inefficient preprocessing of new data and overly conservative rank growth when the new data is incorporated into the existing representation. Moreover, it does not maintain an orthonormal set of TT-cores and therefore cannot be used to project arbitrary new data to obtain its latent representation in TT-format.

	Our contribution is a new incremental algorithm, the TT incremental core expansion (TT-ICE), for computing and updating the tensor train representation of an approximate accumulation $\hat{\mathcal{X}}^k$. Moreover, this approach is suitable for the online setting and never requires full storage or reconstruction of all data while providing a solution to \Cref{prb}. Specifically, the new aspects of our approach include:
		\begin{itemize}
    	\item efficiently updating the rank to accurately represent each new data point;
		\item controlling error growth and maintaining provably guaranteed bounds on the compression error of each tensor in the stream for all times, and 
		\item a set of heuristic modifications that further significantly increase computational efficiency -- this algorithm is called TT-ICE$^*$.
	\end{itemize} 

These contributions are achieved by limiting the number of new vectors appended to the TT-cores of an existing accumulation tensor after each data increment. These vectors represent directions orthogonal to the span of the existing cores and are limited to those needed to represent a newly arrived tensor to a target error.
	Moreover, our theoretical results are empirically justified on video compression applications arising from video game data and from solutions to numerical PDEs. Our results indicate order-of-magnitude benefits in compression by TT-ICE over ITTD. We also demonstrate that TT-ICE works in cases where ITTD runs out of memory. Finally, we show that the heuristic version, TT-ICE$^*$, can achieve $2.6-7.3\times$ speedup over TT-ICE, with only a negligible sacrifice in compression accuracy.
	
	The rest of this paper is structured as follows. In Section~\ref{sec:background}, we present the foundational concepts behind the TT-format and discuss the existing literature on incremental TT-decompositions, in detail. In Section~\ref{sec:methodology}, we present the TT-ICE and TT-ICE$^*$ algorithms and prove the correctness of TT-ICE. In Section~\ref{sec:experiments}, we provide preliminary experiments using our proposed approach on physical and cyber-physical data.

	\section{Background}
	\label{sec:background}
	In this section, we provide the necessary background for the TT-decomposition and review two existing approaches for incremental approximation in TT-format.
	
	\subsection{tensor train decomposition} \label{sec:tt}
	This section reviews the relevant background on tensors and the tensor train decomposition. 
	
	The mode-$i$ \textit{matricization} (or \textit{unfolding}) of a $d$-way tensor $\mathcal{A}\in\R{n_1\times\cdots\times n_d}$ reshapes the tensor into a matrix $A_{(i)}$ with size $n_1\dots n_{i}\times n_{i+1}\dots n_d$.  In this unfolding, all the modes up to the $i$-th mode are mapped into the rows of the matrix and all other modes are mapped to the columns.

	The \textit{contraction} between two tensors $\mathcal{A}\in\R{n_1\times\cdots\times n_d}$ and $\mathcal{B}\in\R{n_d\times n_{d+1}\times\cdots\times n_{D}}$ along the $d$-th dimension of $\mathcal{A}$ and the first dimension of $\mathcal{B}$ is a binary operation represented as
	\begin{equation}
		\mathcal{C} = \mathcal{A}\ \tenprod{d}{1}\ \mathcal{B}, \quad \textrm{ where } \quad
		\mathcal{C}\left(i_{1},i_{2},\dots,i_{d-1},i_{d+1},\dots,i_{D}\right)=\sum_{j=1}^{n_d}\mathcal{A}\left(i_{1},\dots,i_{d-1},j\right)\mathcal{B}\left(j,i_{d+1},\dots,i_{D}\right)
	\end{equation}
	and the output $\mathcal{C}\in\R{n_1\times\cdots\times n_{d-1}\times n_{d+1}\times\cdots\times n_{D}}$ becomes a $(D-2)$-way tensor. The subscripts on either side of the $\times$ sign indicate the contraction axes of the tensors on their respective sides.
	
	A $d$-way tensor $\mathcal{Y}\in\R{n_1\times\cdots\times n_d}$ is said to be in the \textit{TT-format} when it is represented by a sequence of contractions of 3-way tensors $\mathcal{G}_i\in\R{r_{i-1}\times n_i\times r_{i}}$, for $i=1,\dots,d$, according to
	\begin{equation}
		\mathcal{Y}=\mathcal{G}_1\ \tenprod{3}{1}\ \mathcal{G}_2\ \tenprod{3}{1}\ \cdots \ \tenprod{3}{1} \ \mathcal{G}_d.
		\label{eq:tensorcontraction}
	\end{equation}
	The tensors $\mathcal{G}_i$ are called \textit{TT-cores} and the $r_i$, $i=0,\dots,d$ are called \textit{TT-ranks} with $r_0=r_d=1$. 
	The TT-ranks are equal to the ranks of a sequential set of unfoldings~\cite{Oseledets2011}. Specifically, the $i$-th TT-rank $r_i$ is equal to the rank of the mode-$i$ unfolding matrix $Y_{(i)}$. 
	In practice, the unfolding matrices are rarely exactly low-rank, and instead, an \textit{approximate} TT-representation is computed.

 In this case, a rank-$r_i$ truncated SVD of the $i$-th unfolding satisfies

\begin{equation}
		Y_{(i)}=U_i \Sigma_i V_i + E_i,
		\label{eq:deltasvd}
	\end{equation}
 	where $U_i\in\R{m\times r_i}$, $\Sigma_i\in\R{r_i\times r_i}$, $V_i\in\R{r_i\times \ell}$ are the factors of a rank-$r_i$ truncated SVD, and $E_i \in \reals^{m \times \ell}$ is the residual. The dimensions of the unfolding require $m=\prod_{j=1}^{i}n_j$, and $\ell=\prod_{j=i+1}^{d}n_j$. The Eckart-Young-Mirsky Theorem guarantees that the truncated SVD provides the best rank-$r_i$ approximation of the matrix in the Frobenius norm, and the reconstruction error can be bounded by the remaining singular values according to $	\|Y_{(i)}-U_i\Sigma_i V_i\|_F=\|E_i\|_F=\sqrt{\sum_{j=r_{i}+1}^{\min(m,n)} ( \sigma_{j}^{(i)})^{2}},$ where $\sigma_{j}^{(i)}$ is the $j$-th singular value of the mode-$i$ unfolding~\cite{Eckart1936,MIRSKY1960}.
%}
	
	The proof for the TT-ranks~\cite[Th.~2.1]{Oseledets2011}, is constructive and provides an algorithm, called TT-SVD~\cite[Alg.~1]{Oseledets2011}, to compress a tensor to a target accuracy. For a $d$-way tensor $\mathcal{Y}\in\R{n_1\times\cdots\times n_d}$, TT-SVD begins with a truncated SVD of the mode-1 unfolding $Y_{(1)}\in\R{n_1\times n_2\dots n_d}$
	\begin{equation}
		Y_{(1)}=U_1\Sigma_1 V_1+E_1,
		\label{eq:ttcoresstep1}
	\end{equation}
	with orthonormal left singular vectors $U_1\in\R{n_1\times r_1}$ and orthogonal $\Sigma_1 V_1\in\R{r_1\times n_2\dots n_d}$. Then, $\Sigma_1 V_1$ needs to be compressed. This process uses the fact that orthonormality of the left singular vectors guarantees $U_1^TE_1=0,$ so that multiplying $Y_{(1)}$ on the left by $U_1$ leads to 
	\begin{equation}
		U_1^TY_{(1)}=\Sigma_1 V_1.
		\label{eq:ttcoresstep2}
	\end{equation}
	Now, the matrix $\Sigma_1 V_1$ can be reshaped to form $\mathcal{Z}\in\R{r_1n_2\times n_3\times\cdots\times n_d}$, and a  truncated SVD is computed for its mode-1 unfolding
	\begin{equation}
		Z_{(1)}=U_2\Sigma_2 V_2+E_2,
		\label{eq:ttcoresstep3}
	\end{equation} 
	with $U_2\in\R{r_1 n_2\times r_2}$ and $\Sigma_2 V_2\in\R{r_2\times n_3\dots n_d}$. The process then repeats analogous to \Cref{eq:ttcoresstep2,eq:ttcoresstep3} until all dimensions are considered. The left singular vectors $U_i\in\R{r_{i-1}n_i\times r_i}$ of each decomposition become the first $d-1$ TT-cores through reshaping
	\begin{equation}\label{eq:tt_core_reshape}
		\mathcal{G}_i = \texttt{reshape}\left(U_i, \left[r_{i-1}, n_{i}, r_{i}\right]\right) \in\R{r_{i-1}\times n_i\times r_i}.
	\end{equation}
	The final core consists of the right singular vectors scaled with their respective singular values from the final truncated SVD.
	
	The truncation errors for each unfolding described above need to be carefully chosen to ensure a guaranteed relative error bound $\varepsilon\in[0,1]$ with respect to the Frobenius norm.
{Specifically, choosing a dimension-dependent truncation error according to}
$	\delta=\frac{\varepsilon}{\sqrt{d-1}}\|\mathcal{Y}\|_F$
ensures that the computed TT-approximation $\hat{\mathcal{Y}}$ has a relative error less than $\varepsilon$, i.e.  $\|\mathcal{Y}-\hat{\mathcal{Y}}\|_F\leq\varepsilon\|\mathcal{Y}\|_F$ ~\cite[Thm.~2.2]{Oseledets2011}.

Once a TT is computed, basic algebraic operations, such as addition and multiplication, can be computed in closed form. For example, two $d$-way tensors in TT-format with cores $\{\mathcal{G}_i\}_{i=1}^{d}$ and $\{\mathcal{H}_{i}\}_{i=1}^{d}$ can be added if both of the tensors have the size $n_1\times\dots\times n_d$. The resulting tensor $\mathcal{D}=\mathcal{G}+\mathcal{H}$ is calculated using the following rule
\begin{equation}
	\mathcal{D}_m(j_m)=
	\begin{cases}\vspace{0.2ex}
		\mat{\mathcal{G}_1(j_1)&\mathcal{H}_1(j_1)} &,\ m=1;\\ \vspace{0.2ex}
		\mat{\mathcal{G}_m(j_m)& 0 \\ 0& \mathcal{H}_m(j_m)}& ,\ m=2,\dots d-1;\\ 
		\mat{\mathcal{G}_d(j_d)\\\mathcal{H}_d(j_d)}&,\ m=d,
	\end{cases}
	\label{eq:ttaddition}
\end{equation}
where $\mathcal{G}_m(j_m)\in \R{r_{m-1}\times r_m}$ denotes the $j_m$-th slice of the $m$-th TT-core $\mathcal{G}_m$. If $\mathcal{G}$ has TT-ranks $r_i$ and $\mathcal{H}$ has TT-ranks $\bar{r}_i$, the sum $\mathcal{D}$ will have the TT-ranks $\hat{r}_{i}=r_i+\bar{r}_i$ with the exception of $\hat{r}_0=\hat{r}_d=1$.

\subsection{Existing incremental tensor decompositions}\label{sec:existingliterature}

This section discusses two existing approaches for solving Problem~\ref{prb}: the First-Order Adaptive Tensor Train Decomposition (TT-FOA)~\cite{Thanh2021} and the Incremental Tensor Train Decomposition (ITTD)~\cite{Liu2018}.

There are two main steps of incremental procedures: (1) preprocessing the newly arrived tensor $\mathcal{Y}^{k+1}\in\R{n_1\times \cdots\times n_d}$ and (2) updating the TT-cores of the previous accumulation tensor $\mathcal{X}^{k}$ to obtain new TT-cores for  $\mathcal{X}^{k+1}$ with this information. 
Below we describe how TT-FOA and ITTD handle these two steps.

\paragraph{Preprocessing $\mathcal{Y}^{k+1}$}
At time $k+1$, TT-FOA poses the problem of preprocessing the new tensor $\mathcal{Y}^{k+1}$ using the TT-cores of the approximate accumulation $\hat{\mathcal{X}}^{k}$ as the following regularized optimization problem
\begin{equation}
	g^{k+1}=\argmin_{g\in\R{r_d\times1}} \|\mathcal{Y}^{k+1}-\hat{\mathcal{X}}^k\ \tenprod{(d+1)}{1}g \|_{F}^{2} +\frac{\rho}{2} \|g\|_2^2,
	\label{eq:tt-foaoptimization}
\end{equation}
where $g^{k+1}$ is the representation of $\mathcal{Y}^{k+1}$ as the $(d+1)$-th TT-core of the accumulation $\mathcal{X}^{k}$, $\rho$ is a small regularization parameter, and the contraction of first $d$ TT-cores $\hat{\mathcal{X}}^k\in\R{n_1\times\cdots\times n_d\times r_d}$ is computed as
\begin{equation}
	\hat{\mathcal{X}}^k =\mathcal{G}_1\ \tenprod{3}{1}\ \cdots \ \tenprod{3}{1}\ \mathcal{G}_{d}.
	\label{eq:tt-foastep1}
\end{equation} 
Note that these first $d$ cores are common for all compressed data, and only the final core is unique to a particular datum. In other words, one can view the first $d$ cores as the basis of approximation that is common to all the data and the final core as the coefficients of this basis that are specific to each datum. 
Without the regularization term, the optimal solution $g^{k+1}$ would be the projection of $\mathcal{Y}^{k+1}$ onto the basis defined by $\hat{\mathcal{X}}^k$. Thus, any component of the new data not represented by the existing accumulation would be discarded. The regularization biases the solution to be closer to zero.

In \cite{Thanh2021}, the optimization problem is solved using the randomized sketching technique \cite{Mahoney2011}. The closed-form solution is
\begin{equation}\label{eq:tt-foaridgeregression}
	g^{k+1}=\left(\mathcal{L}(\hat{X}_{(d)}^{k})^T\mathcal{L}(\hat{X}_{(d)}^{k})+\rho I_{r_d}\right)^{-1}\mathcal{L}(\hat{X}_{(d)}^{k})^T\mathcal{L}(y^{k+1}),
\end{equation}
with $\hat{X}_{(d)}^{k}\in\R{n_1\dots n_d\times r_d}$ as the mode-$d$ unfolding of $\hat{\mathcal{X}}^k$, $y^{k+1}\in\R{n_1\dots n_d}$ as the reshaping of $\mathcal{Y}^{k+1}$ into a vector, and $\mathcal{L}(\cdot)$ as the sketching map that samples the same set of rows from $\hat{X}_{(d)}^{k}$ and $y^{k+1}$.

ITTD follows another approach. It reshapes the streamed $d$-way tensor $\mathcal{Y}^{k+1}$ to be a $d+1$-way tensor with size $n_1\times\cdots\times n_d\times 1$ and computes the TT-decomposition of the reshaped tensor using the TT-SVD algorithm, without leveraging any information from the accumulation tensor. As a result, it requires the computation of a new sequence of singular value decompositions to discover an entirely new basis --- a computation step not needed by TT-FOA or our proposed approach.

\paragraph{Updating the TT-approximation of $\mathcal{X}^{k}$ to that of $\mathcal{X}^{k+1}$}

TT-FOA assumes fixed TT-ranks and adopts a gradient descent procedure to update the TT-cores of $\mathcal{X}^{k}$. With this fixed-rank assumption, TT-FOA cannot guarantee a predefined upper bound for representation error except in the limit. TT-FOA uses the following optimization objective to update the TT-cores of $\mathcal{X}^{k}$

\begin{equation}\label{eq:tt-foacoreoptimization1}
\mathcal{G}_i=\argmin_{\mathcal{G}\in\R{r_{i-1}\times n_i\times r_i}}\sum_{j=1}^{k+1}\lambda^{k+1-j}\|\mathcal{Y}^j-\mathcal{A}_i\ \tenprod{(i+1)}{1}\ \mathcal{G}\ \tenprod{(i+2)}{1}\ \mathcal{B}_{i}^{j} \|^2_F,
\end{equation}
where
\begin{equation}\label{eq:tt-foacorecontraction}
	\begin{split}
		\mathcal{A}_{i}&=\mathcal{G}_{1}\ \tenprod{3}{1} \ \cdots \ \tenprod{3}{1}\ \mathcal{G}_{i-1}, \\
		\mathcal{B}_{i}^{j}&=\mathcal{G}_{i+1}\ \tenprod{3}{1} \ \cdots \ \tenprod{3}{1}\ \mathcal{G}_d \ \tenprod{3}{1} \ g^{j},
	\end{split}
\end{equation}
and
$\lambda\in(0,1]$ is the forgetting factor discounting the effect of previous tensors. Note that only the superscript of the forgetting factor $\lambda$ is for exponentiation. $g^j$ denotes the coefficient vector representing $\mathcal{Y}^j$ in the $(d+1)$-th TT-core. Note that the objective function updates the $i$-th core $\mathcal{G}_i$ independent from the other cores and allows updating the cores in parallel. The TT-FOA algorithm provides a recursive approach to efficiently update this core, and we leave further details to~\cite{Thanh2021}.

% \DAcomment{At the beginning of the TT-FOA algorithm ($k=0$), the matrix $S_{i}^{0}$ is initialized as the identity matrix and the TT-cores $\{\mathcal{G}_i\}_{i=1}^{d}$ are initialized randomly.}{I think this is a paragraph remained from previous versions, there is no mention of the $S_{i}$ matrices in this version. Leave a note when you see this comment so that I can take appropriate action.}
% \AGnote{Yes get rid of it}

Next, we describe how ITTD updates the accumulated tensor. Recall that at ITTD, this update is preceded by reshaping the $d$-way tensor $\mathcal{Y}^{k+1}$ into a $d+1$-way tensor with the $(d+1)$-th dimension as 1 and calculating an independent TT-decomposition for $\mathcal{Y}^{k+1}$. ITTD then merges the newly compressed tensor with the existing accumulation through a specific addition operation. To this end, the TT-core of $\mathcal{Y}^{k+1}$ corresponding to the growing dimension is padded with zeros so that the growing dimension becomes {$(k+1)$}. The same padding procedure is repeated for the TT-core of $\mathcal{X}^k$ so that the growing dimension becomes $(k+1)$. This zero-padding process is done to ensure the dimensional consistency between the two TT-representations. While padding zeros to the TT-cores, the ITTD algorithm places the padding tensors so that both tensors can be added without interfering with each other. ITTD algorithm then combines the TT-cores of the streamed tensor $\mathcal{Y}^{k+1}$ and the existing TT-cores of $\mathcal{X}^k$ by adding them in TT-format using the rule shown in \eqref{eq:ttaddition}.
After the addition of these two tensors in TT-format, the TT-ranks are the sum of the component ranks. Therefore, an optional final step of the ITTD algorithm, the TT-rounding procedure of \cite[Alg 2]{Oseledets2011} is executed to reorthogonalize and recompress the TT-cores after addition.

\subsection{Limitations of existing approaches}\label{sec:limitations}
This section discusses the shortcomings of the existing methods in the literature.

The main limitation of TT-FOA is that the TT-representation error cannot be guaranteed for two reasons. First, due to the random initialization and the recursive update scheme, the TT-FOA algorithm starts with a high representation error. As time progresses and new observations become available, this error converges to a steady-state value. However, the lower bound of this steady-state value depends on the second limitation -- that the TT-ranks of the accumulation are fixed as an input to the TT-FOA algorithm. If the selected TT-ranks are insufficient to represent the data accurately, there is no choice but to restart the entire TT-FOA algorithm with higher TT-ranks. Furthermore, the forgetting factor influences the TT-cores of the accumulation, so that representing the recently streamed tensors is more important than the earlier parts of the accumulation. This results in a loss of representation accuracy if the information in the tensor stream varies over time.

On the other hand, the approach we propose does not require time to converge to a steady-state error for an accumulation since truncated SVD computes the best-low-rank approximation. Furthermore, when the current TT-ranks of the decomposed accumulation are not sufficient to represent the streamed tensor $\mathcal{Y}^{k+1}$ up to a prescribed precision, our approach increases the TT-ranks accordingly to meet the precision criterion. Most importantly, our approach guarantees that the whole accumulation is represented within their prescribed precision bounds for all times.

The limitation of the ITTD algorithm stems from the inefficiencies of using the existing TT-approximation of accumulation $\mathcal{X}^k$ to aid the incorporation of the new data tensor. If the streamed data is highly structured, the TT-SVD step in ITTD is likely to return TT-cores that are redundant with the TT-cores of the accumulation. Therefore, the addition in TT-format likely results in linearly dependent TT-cores and superfluous TT-ranks that make TT-rounding an obligatory step for adequate compression.

In the case of an unbounded tensor stream, such a rounding procedure is required; otherwise, the TT-cores would grow without bound. However, the rounding procedure has a complexity that scales with the cube of the TT-ranks. This leads to a vicious cycle between rank growth and speed for ITTD. Putting off TT-rounding allows faster execution time, but the rank growth causes a cubic increase in time for future rounding attempts. On the other hand, avoiding rounding results in unbounded rank growth, which makes the rounding procedure impossible due to memory limitations.

We provide a more controlled rank growth by using the TT-cores of $\mathcal{X}^k$ as a foundation and expanding them only with complementing information obtained through $\mathcal{Y}^{k+1}$.
Furthermore, for the edge case where the current TT-cores of the accumulation can represent the streamed data sufficiently accurate, our method can skip updating the TT-cores of the accumulation.

In the next section, we present our incremental TT algorithm, which provides a solution to all the shortcomings of existing incremental TT-decomposition algorithms in detail.

\section{Methodology: the TT-ICE algorithm}
\label{sec:methodology}
In this section, we present a new incremental TT-decomposition algorithm. Similar to the existing work, we tackle the problem of computing the incremental TT-decomposition of tensor streams in two steps: preprocessing and update. However, a distinguishing feature of our approach is that it processes and updates each dimension sequentially in a single pass. Furthermore, the subsequent algorithm considers a slightly generalized setting in which $\newdim^{k+1}$ tensors become available during the $k+1$-th increment. Equivalently, this can view the new data at increment $k+1$ as a tensor with an additional dimension $\mathcal{Y}^{k+1}\in\R{n_1\times\cdots\times n_d\times \newdim^{k+1}}$.

\subsection{Overview}
This section walks through the steps of TT-ICE and presents a pseudocode for TT-ICE.

The proposed algorithm updates the TT-cores $\{\mathcal{G}_{i}^k\}_{i=1}^{d+1}$ of the approximate accumulation tensor $\hat{\mathcal{X}}^k$ to obtain updated cores $\{\mathcal{G}_{i}^{k+1}\}_{i=1}^{d+1}$ of the approximate accumulation tensor $\hat{\mathcal{X}}^{k+1}.$  Let $\{U_{i}^k\}_{i=1}^{d+1}$ denote the reshapings of the cores at increment $k$ according to Equation~\eqref{eq:tt_core_reshape}, these will then be updated to obtain $\{U_{i}^{k+1}\}_{i=1}^{d+1}.$ At each increment, these matrices will be orthonormal, and the update will ensure they remain orthonormal.
Similarly, we will update the ranks $\{r_{i}^{k}\}_{i=0}^{d+1}$ to $\{r_{i}^{k+1}\}_{i=0}^{d+1}.$  The approach is provided in Algorithm~\ref{alg:TT-ICE}, and we describe each step next.

\begin{algorithm}
	\caption{\texttt{TT-ICE}: Incremental update of a tensor train decomposition}
	\label{alg:TT-ICE}
	\begin{algorithmic}[1]
		\Input 
		\Desc{$\left\{U^{k}_i\right\}_{i=1}^{d+1}$}{$\quad$ reshaped cores of the TT-decomposition of the accumulation $\mathcal{X}^k$} 
		\Desc{$\mathcal{Y}^{k+1}\in\R{n_1\times\cdots\times n_d\times \newdim^{k+1}}$}{$\quad$ new tensor}
		\Desc{$\left\{\epsilon_{i}\right\}_{i=1}^{d}$}{$\quad$ SVD truncation tolerances}
		\EndInput
		\Output 
		\Desc{$\left\{U^{k+1}_i\right\}_{i=1}^{d+1}$}{ $\quad$ updated cores for the accumulation $\mathcal{X}^{k+1}$} 
		\EndOutput

		\State \textcolor{grey}{\textbf{Preprocessing the first dimension}}
		\State $Y_{1}= \texttt{reshape}\left(\mathcal{Y}^{k+1}, \left[n_1,n_{2}\dots \newdim^{k+1}\right]\right)$
		\State $R_1^k = \left(I - U^k_{1}U_1^{k^T}\right) Y_1$  \label{alg:line:firstresid} 

		\State $U^{k,pad}_1 \gets U_{1}^{k}$ \Comment{First core has no padding}
		\For{$i=1$ to $d-1$}
		\State \textcolor{grey}{\textbf{Updating $i$-th core $\mathcal{G}_{i}^{k}\rightarrow\mathcal{G}_{i}^{k+1}$}}
		\State $U^k_{R_i}, r_{R_{i}}, V_{R_{i}}^{k} \gets\texttt{SVD}\left(R_i^k, \epsilon_i \right)$ \Comment{$U_{R_i}\in\R{r_{i-1}^{k}n_i\times r_{R_i}}$}
		
		\State $U_i^{k+1}\gets \mat{U^{k,pad}_i& U^{k}_{R_i}}$ \label{alg:line:append}
		\State $U_{i+1}^{k,pad}\gets  \texttt{reshape}\left(\begin{bmatrix}
			\texttt{reshape}\left(U^{k}_{i+1},[r_i^{k}, n_{i+1}r_{i+1}^k]\right) \\
			\textbf{0}_{r_{R_i} \times n_{i+1}{r_{i+1}^k}}
		\end{bmatrix},  [r^{k+1}_{i}n_{i+1}, r_{i+1}^k] \right)$
		\Comment{Pad for rank compat.}
		
		\State \textcolor{grey}{\textbf{Preprocessing the subsequent dimensions of $\mathcal{Y}^{k+1}$}}
		\State $Y_{i+1} \gets \texttt{reshape}\left(U_{i}^{k+1^T}Y_{i}, [r_{i}^{k+1}n_{i+1}, n_{i+2}\cdots \newdim^{k+1}]\right)$ \
		
		\State $R_{i+1}^k\gets \left(I-U_{i+1}^{k,pad}U_{i+1}^{k,pad^T}\right)Y_{i+1}$ 
		
		\EndFor

		\State \textcolor{grey}{\textbf{Updating the $d$-th core}}
		\State $U^k_{R_d}, r_{R_{d}}, V_{R_{d}}^{k} \gets\texttt{SVD}\left(R_d^k, \epsilon_d \right)$ \label{alg:line:truncatedsvd}
		\State $U_d^{k+1}\gets \mat{U^{k,pad}_d& U^{k}_{R_d}}$ 
		\State \textcolor{grey}{\textbf{Updating the last core}}
		\State $Y_{d+1} \gets U_{d}^{k+1^T}Y_{d}$\label{alg:line:lastcore} \Comment{No need to reshape for $Y_{d+1}$ since $U_{d}^{k+1^T}Y_{d}\in\R{r_{d}^{k+1}\times \newdim^{k+1}}$} 
		\State $U_{d+1}^{k+1}\gets \mat{U_{d+1}^{k,pad}& Y_{d+1}}$
		
	\end{algorithmic}
\end{algorithm}

\paragraph{Preprocessing the first dimension of $\mathcal{Y}^{k+1}$}
The first dimension is processed by projecting the reshaped tensor $Y_1 \equiv Y^{k+1}_{(1)}$ onto the orthogonal complement of the space spanned by the first dimension $U_1^k$ through
\begin{equation}\label{eq:first_resid}
	R^k_1=Y^{k+1}_{(1)} - U_{1}^k U_{1}^{k^T}Y^{k+1}_{(1)} = \left(I - U^k_{1}U_1^{k^T}\right)Y_{1}, \quad R_{1}^k \in \reals^{n_1 \times n_2\cdots \newdim^{k+1}},
\end{equation}
as provided in Line~\ref{alg:line:firstresid}.
The column-space of this residual is orthogonal to $U_1^k$, and the corresponding basis is extracted in the updated phase for each dimension via the truncated SVD.

\paragraph{Updating the $i$-th core $\mathcal{G}_i^k \to \mathcal{G}_{i}^{k+1}$}

The update step now uses both the residual $R_{i}^k$ and an intermediate, padded version of the $i$-th core $U_{i}^{k,pad}$ to compute the updated core $U_{i}^{k+1}.$ First the residual is decomposed via the truncated SVD
\begin{equation}
	R^k_{i}=U^{k}_{{R}_{i}}\Sigma^{k}_{{R}_{i}}V^{k}_{{R}_{i}} + E_{i}^k = U^{k}_{{R}_{i}}V^{*}_{{R}_{i}}  + E_{i}^k, \quad \lVert E_i^k \rVert_{F} = \epsilon_{i},
	\label{eq:svdfirst}
\end{equation}
where the second equality multiplies the singular value matrix and the right singular vectors. The left singular vectors are orthogonal to $U_{i}^{k, pad}$, so the updated basis can be obtained by appending these new directions as in Line~\ref{alg:line:append}
\begin{equation}
	U^{k+1}_i=\mat{U^{k,pad}_i &U^{k}_{R_{i}}} \in \R{r^{k+1}_{i-1}n_i\times (r_i^{k+1})},
	\label{eq:coreexpansion}
\end{equation}
where the new TT-rank becomes $r_i^{k+1} = r_{i}^k + r_{R_i}.$ 

The updated TT-core becomes $\mathcal{G}_{i}^{k+1}=\texttt{reshape}\left(U^{k+1}_i, \left[r_{i-1}^{k+1},n_i,r_i^{k+1}\right]\right)$. Note that this update increases the rank between dimensions $i$ and $i+1$ from $r_{i}^k$ to $r_{i}^{k+1}$. This new dimension causes a discrepancy that prohibits the contraction between the $i$-th and $i+1$-th cores. Therefore, the $i+1$-th core must be padded with zeros
\begin{equation}
    U^{k,pad}_{i+1} \leftarrow \texttt{reshape}\left(\begin{bmatrix}
    \texttt{reshape}\left(U^{k}_{i+1},[r_i^{k}, n_{i+1}r_{i+1}^k]\right) \\
    \textbf{0}_{r_{R_i} \times n_{i+1}{r_{i+1}^k}}
    \end{bmatrix},  [r^{k+1}_{i}n_{i+1}, r_{i+1}^k] \right).
    \label{eq:ukpad}
\end{equation}
Note that this padding leaves $U^{k,pad}_{i+1}$ as orthonormal and does not change the span defined by the components of the first $r_{i}^k$ rows of the reshaping.

After this next core, we have a partially updated representation of the accumulation for $\mathcal{\hat{X}}^{k+1}$
\begin{equation}
    \hat{\mathcal{X}}^{k+1} = \mathcal{G}_{1}^{k+1} \  \tenprod{3}{1} \  \cdots \tenprod{3}{1} \  \mathcal{G}_{i}^{k+1} \tenprod{3}{1} \  \mathcal{G}^{k,\textrm{pad}}_{i+1} \  \tenprod{3}{1} \  \mathcal{G}^k_{i+2} \  \tenprod{3}{1} \cdots \tenprod{3}{1} \  \mathcal{G}^{k}_{d+1},
\end{equation}
where $\mathcal{G}^{k, \textrm{pad}}_{i+1}$ refers to the padded core formed by reshaping $U^{k,pad}_{i+1}$ following the update in~\eqref{eq:ukpad}. After this step, we can proceed to pre-process in preparation for updating the next dimension.

\paragraph{Preprocessing the subsequent dimensions of $\mathcal{Y}^{k+1}$} Preprocessing preparation for the update of the next dimension begins by updating the projection of the new data onto the updated core and reshaping 
\begin{equation}
    Y_{i+1} = \texttt{reshape}\left(U_{i}^{k+1^T}  Y_i, [r_{i}^{k+1}n_{i+1}, n_{i+2}\cdots \newdim^{k+1}] \right).
    \label{eq:project_further}
\end{equation}
This projection uses the {\it updated} tensor cores $U_{i}^{k+1}$ and therefore has dimensions corresponding to the updated rank $r_{i}^{k+1}$. Then, the residual with respect to the span of the next core is obtained via a projection onto the orthogonal complement to the existing space
\begin{equation}\label{eq:ithresidual}
	R^k_i= \left(I - U^{k,pad}_{i}U_{i}^{k,pad^T}\right) Y_{i+1}.
\end{equation}

\paragraph{Updating the last core}
The procedure in \eqref{eq:project_further}-\eqref{eq:ukpad}
is repeated sequentially for the first $d$ dimensions. Then after updating $U_{d}^{k,pad}$ to $U_{d}^{k+1}$ and padding $U_{d+1}^{k}$ with zeros as in \eqref{eq:ukpad}, projecting $Y_{d}$ onto $U_{d}^{k+1}$ as in Line~\ref{alg:line:lastcore} yields $Y_{d+1}$, with which we can update $U_{d+1}^{k,pad}$ as

\begin{equation}
	U_{d+1}^{k+1}\leftarrow \begin{bmatrix}
		U_{d+1}^{k,pad} & Y_{d+1}
	\end{bmatrix},
\label{eq:lastcore}
\end{equation}
and obtain the representation for $\hat{\mathcal{X}}^{k+1}$ in TT-format.

\subsection{Analysis}
This section provides proof that TT-ICE can achieve and maintain a target accuracy throughout the compression process.

We first show that Algorithm~\ref{alg:TT-ICE} provides guaranteed reconstruction error of a new tensor given appropriate truncation of the truncated SVDs.
Following the completion of TT-ICE, the approximation of the most recently compressed data point can be obtained by
\begin{equation}
    \hat{\mathcal{Y}}^{k+1} = \mathcal{G}_{1}^{k+1} \  \tenprod{3}{1} \  \cdots \tenprod{3}{1} \  \mathcal{G}^{k+1}_{d} \  \tenprod{3}{1}  \mathcal{G}^{k+1}_{d+1}[-n_{d+1}^{k+1}:], 
    \label{eq:last_core_latent}
\end{equation}
where the index $[-n_{d+1}^{k+1}:]$ denotes the last $n_{d+1}^{k+1}$ columns of the core, as seen in Equation~\eqref{eq:lastcore}.
Recall that $\mathcal{G}^{k+1}_{d+1}$ is a matrix and that the columns $\mathcal{G}^{k+1}_{d+1}[-n_{d+1}^{k+1}:]$ are the only elements of the updated accumulation that are unique to the $(k+1)$-th data point. We now show that the algorithm enables a well-controlled approximation error of this latest tensor.
\begin{theorem}\label{thm:errorbound}
    Let $\{U_{i}^{k}\}_{i=1}^{d+1}$ denote the unfolded TT-cores of the approximate accumulation $\hat{\mathcal{X}}^k$, $\mathcal{Y}^{k+1}$ be a new streamed tensor and $\left\{\epsilon_i\right\}_{i=1}^d$ be the SVD truncation tolerances, then the TT-ICE algorithm computes a TT-approximation $\hat{\mathcal{Y}}^{k+1}$~\eqref{eq:last_core_latent} satisfying
    \begin{equation}\label{eq:errorboundtheorem}
        \lVert \mathcal{Y}^{k+1} - \hat{\mathcal{Y}}^{k+1} \rVert_F \leq \sqrt{\sum_{i=1}^{d} \epsilon_i^2}.
    \end{equation}
\end{theorem}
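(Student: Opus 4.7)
The plan is to bound the error dimension by dimension via a Pythagorean telescoping argument, using that at each level the newly appended SVD directions are orthogonal to the previously accumulated basis. For each $i = 1,\ldots,d$, let $P_i = U_i^{k+1} U_i^{k+1^T}$. First I would verify that $U_i^{k+1}$ has orthonormal columns: the padded block $U_i^{k,pad}$ is orthonormal because zero-padding preserves orthonormality, and the appended block $U_{R_i}^k$ is the left-singular-vector matrix of $R_i^k = (I - U_i^{k,pad} U_i^{k,pad^T}) Y_i$, whose column space is orthogonal to the range of $U_i^{k,pad}$. Hence $U_i^{k+1} = [U_i^{k,pad}, U_{R_i}^k]$ is an isometry and $P_i$ is a genuine orthogonal projection.

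Next I would establish the per-level bound $\|(I - P_i) Y_i\|_F \leq \epsilon_i$. Writing $Y_i = U_i^{k,pad} U_i^{k,pad^T} Y_i + R_i^k$ and substituting the truncated SVD $R_i^k = U_{R_i}^k V_{R_i}^* + E_i^k$ with $\|E_i^k\|_F \leq \epsilon_i$, the first two terms both lie in the range of $P_i$, so $(I - P_i) Y_i = (I - P_i) E_i^k$ and the bound follows from the contractive property of $I - P_i$. Then, defining $\hat{Y}_i$ to be the approximation of $Y_i$ assembled from the updated cores $U_i^{k+1}, U_{i+1}^{k+1}, \ldots, U_d^{k+1}$ and the freshly computed final-core slice $Y_{d+1}$, the core identity I would prove is
\[
\|Y_i - \hat{Y}_i\|_F^2 = \|(I - P_i) Y_i\|_F^2 + \|Y_{i+1} - \hat{Y}_{i+1}\|_F^2,
\]
where $Y_{i+1}$ is the reshape of $U_i^{k+1^T} Y_i$ from line 10 of the algorithm. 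This follows by decomposing $Y_i - \hat{Y}_i = (I - P_i) Y_i + U_i^{k+1}(U_i^{k+1^T} Y_i - \text{reshape}^{-1}(\hat{Y}_{i+1}))$, noting that the two summands are orthogonal (one lies in $\ker P_i$, the other in $\operatorname{ran} P_i$), and using that $U_i^{k+1}$ is an isometry and reshape is Frobenius-preserving so that the second summand has norm $\|Y_{i+1} - \hat{Y}_{i+1}\|_F$. Finally, at $i = d$ the algorithm sets $\hat{Y}_d = U_d^{k+1} Y_{d+1}$ so $Y_d - \hat{Y}_d = (I - P_d) Y_d$, contributing exactly $\epsilon_d^2$ to terminate the recursion. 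Iterating the Pythagorean identity from $i=1$ down yields $\|Y^{k+1} - \hat{Y}^{k+1}\|_F^2 \leq \sum_{i=1}^d \epsilon_i^2$.

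The step I expect to require the most care is the reshape bookkeeping in the Pythagorean identity: I must verify that the quantity $Y_{i+1}$ generated at line 10 really does play the role of a ``level-$(i{+}1)$ unfolding'' in the isometry identity, so that $\|U_i^{k+1} \cdot \text{reshape}^{-1}(\cdot)\|_F = \|\cdot\|_F$ applies cleanly. This is the same bookkeeping that underlies the standard TT-SVD error analysis of Oseledets; once it is in place, everything else is a short orthogonality argument and a sum of squares.
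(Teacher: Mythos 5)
Your proposal is correct and follows essentially the same route as the paper's proof: a dimension-by-dimension orthogonal decomposition $Y_i = P_i Y_i + (I-P_i)Y_i$ with the truncated-SVD residual supplying the per-level bound $\epsilon_i$, a Pythagorean identity justified by the orthonormality of the updated core $U_i^{k+1}$ (the paper writes this with the matrices $B_i = U_i^{k+1^T}Y_i$ in place of your explicit projectors $P_i$, and notes $Y_{i+1}\equiv B_i$ up to reshape), and telescoping terminated by the exact storage of $Y_{d+1}$ in the last core. The reshape bookkeeping you flag is exactly the point the paper also handles by identifying $Y_{i+1}$ with $B_i$, so no gap remains.
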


\begin{proof}

The proof is similar to that of \cite[Thm 2.2]{Oseledets2011} and is by induction. For $d=1$, the statement follows from the properties of the truncated SVD.

Now we consider an arbitrary $d>1$.
The incremental updates start with the first dimension. We start with processing the first core, which works on the first unfolding of the new data. First, we show that computing the SVD of the residual leads to a low-rank approximation of $Y^{k+1}_{(1)}$ with an approximation error bounded by a controlled truncation tolerance. The definitions of the residual and projection given by Equations~\eqref{eq:first_resid} lead to
\begin{align}\label{eq:residualfirstdimension}
    Y^{k+1}_{(1)} &= U_{1}^kU_{1}^{{k}^T}Y^{k+1}_{(1)} + R_{1}^k = U_{1}^k U_{1}^{{k}^T}Y_1 + U_{R_{1}}^{k}V_{R_1}^{k} + E_{1}^{k}  = U_{1}^{k+1} \underbrace{\begin{bmatrix}
     U_{1}^{{k}^T}Y_1 \\
     V_{R_1}^{k} 
     \end{bmatrix}}_{B_1}
     + E_{1}^{k} 
    = U^{k+1}_1 B_1 + E_{1}^{k},
\end{align}
where the third equality appends the existing basis $U_1^k$ and the new directions obtained from the SVD of the residual $U_{R_1}^k$ to form the updated core $U_1^{k+1}.$
Thus, after the first dimension is processed, the first core of $\mathcal{G}_1^{k+1}$ is obtained, as a reshaping of $U_1^{k+1}$. Equation~\eqref{eq:residualfirstdimension} demonstrates that this core is multiplied by a tensor $\mathcal{B}_1\in \reals^{r^{k+1}_{1}n_2 \times n_{3}\cdots n_{d+1}^{k+1}}$ that has reshaping $B_1 \in \reals^{r^{k+1}_1 \times n_{2}\cdots n_{d+1}^{k+1}}$, and that the truncated SVD implies the approximation error $\epsilon_{1}^{2}$:
\begin{align}
     \lVert Y^{k+1}_{(1)}-U_{1}^{k+1}B_1\|^{2}_{F} \leq \epsilon_1^2 .
     \label{eq:proof1}
\end{align}
The algorithm then proceeds to decompose the still high-dimensional $B_1$ into an approximation. Let $\hat{B}_1$ denote this approximation of the remaining dimensions so that we now seek to bound the error
\begin{align}
     \lVert \mathcal{Y}^{k+1} - \hat{\mathcal{Y}}^{k+1} \rVert^{2}_F &= \lVert Y^{k+1}_{(1)}-U_{1}^{k+1}\hat{B}_1\|^{2}_{F}.
\end{align}
Next, this error can be rewritten in terms of the known error~\eqref{eq:proof1} and the subsequent approximation error incurred by $\hat{B}_1$. To this end, add and subtract the exact $B_1$ to obtain
\begin{align}\label{eq:errorbounds1}
     \lVert \mathcal{Y}^{k+1} - \hat{\mathcal{Y}}^{k+1} \rVert_F^2 &= \lVert Y^{k+1}_{(1)}-U_{1}^{k+1}\hat{B}_1\|^{2}_{F} =\lVert Y^{k+1}_{(1)}-U_{1}^{k+1}\left(\hat{B}_1 + B_1 - B_1 \right)  \|^{2}_{F},\\ 
    \label{eq:errorbounds2}&=  \lVert Y^{k+1}_{(1)}-U_{1}^{k+1}B_1\rVert^{2}_{F} + \lVert  U_{1}^{k+1}\left(B_1 - \hat{B}_1\right) \rVert^{2}_{F},\\% = \epsilon_1^2 + \lVert  B_1 - \hat{B}_1 \rVert^2_{F}%\\
    \label{eq:errorbounds3}&\leq \epsilon_1^2 + \lVert  B_1 - \hat{B}_1 \rVert^2_{F},
\end{align}
where $U^{k+1}_{1}$ has orthonormal columns and the inequality in Eq.~\eqref{eq:errorbounds3} arises from Eq.~\eqref{eq:proof1}.

At this stage of the algorithm, we  have updated $U_{1}^{k+1}$ and padded $U_{2}^k$ with $r_{R_{1}}\times n_{2}r_{2}^{k}$ zeros according to~\eqref{eq:ukpad}.  The rest of the algorithm proceeds by now performing the same approximation for $B_1$, beginning with a projection onto the zero padded second core, $U_{2}^{k,pad}.$ First we note that the orthonormality of the left singular vectors implies $B_{1} = U_{1}^{(k+1)^T} Y_{(1)}^{k+1}=U_{1}^{(k+1)^T} Y_{1}$ from \eqref{eq:residualfirstdimension}. Combined with Equation~\eqref{eq:project_further}, these facts imply that $Y_{2} \equiv B_{1}$ up to a reshaping. Then the next residual becomes
\begin{equation}
\left(I - U_{2}^{k,pad} U_{2}^{k,pad^T}\right) B_{1}  \equiv \left(I - U_{2}^{k,pad} U_{2}^{k,pad^T}\right) Y_{2} = R_{2}^{k}  .
\end{equation}
Similar to \eqref{eq:residualfirstdimension}, we can expand $B_1$ into the components parallel to the space spanned by the padded by $U_2^{k,pad}$ and orthogonal to this space so that 
\begin{equation}\label{eq:residualseconddimension}
	B_1=U_{2}^{k,pad}U_{2}^{k,pad^T}B_1+R_{2}^{k} = U_{2}^{k,pad}U_{2}^{k,pad^T}Y_2+U_{R_2}^{k}V_{R_2}^{k}+E_{2}^{k}=U_{2}^{k+1}\underbrace{\begin{bmatrix}
		U_{2}^{k,pad^T}	Y_2 \\V_{R_2}^{k}
	\end{bmatrix}}_{B_2}+E_{2}^{k+1}=U^{k+1}_{2}B_{2}+E_{2}^{k}.
\end{equation}
Similar to \eqref{eq:proof1}, the truncated SVD for this dimension ensured $\lVert E_2^{k} \rVert_{F}^{2} \leq \epsilon_2^2$. Then, parallel to \eqref{eq:errorbounds1}-\eqref{eq:errorbounds3}, this time we expand the error in $B_1$ according to
\begin{equation}
%	\resizebox*{0.9\hsize}{!}{$
\begin{aligned}
	\|B_{1}-\hat{B}_1 \|_{F}^{2} =& \| B_{1}-U_{2}^{k+1}\hat{B}_{2}\|_{F}^{2},\\
	=&\|B_{1}-U_{2}^{k+1}\left(\hat{B}_{2}+B_{2}-B_{2}\right) \|_{F}^{2}, \\
	=& \|B_{1}-U_{2}^{k+1}B_{2}\|_{F}^{2}+\|U_{2}^{k+1}\left(B_{2}-\hat{B}_{2}\right)\|_{F}^{2},\\
	\leq&\epsilon_{2}^{2}+\|B_{2}-\hat{B}_{2}\|_{F}^{2}.
\end{aligned}
%$}
\end{equation}
If we rewrite Equation~\eqref{eq:residualseconddimension} for $B_{d-1}$, we see that 
\begin{equation}\label{eq:residuallastdimension}
	\resizebox*{0.94\hsize}{!}{$
	B_{d-1}=U_{d}^{k,pad}U_{d}^{k,pad^T}B_{d-1}+R_{d}^{k}\equiv U_{d}^{k,pad}U_{d}^{k,pad^T}Y_{d}+U_{R_{d}}^{k}V_{R_{d}}^{k}+E_{d}^{k}=U_{d}^{k+1}\underbrace{\begin{bmatrix}
			U_{d}^{k,pad^T}	Y_{d} \\V_{R_{d}}^{k}
	\end{bmatrix}}_{B_d}+E_{d}^{k+1}=U^{k+1}_{d}B_{d}+E_{d}^{k}
$}
\end{equation}
with $B_{d}\in\R{r_{d}^{k+1}\times n_{d+1}^{k+1}}$, which is equal to $Y_{d+1}$. We directly append $Y_{d+1}$ to the $d+1$-th core and conclude our update procedure for $\mathcal{Y}^{k+1}$. Therefore, for a $d+1$ dimensional tensor, repeating the update for the first $d$ dimensions successively yields the approximation error

\begin{equation}\label{eq:errorsummation}
	\lVert \mathcal{Y}^{k+1} - \hat{\mathcal{Y}}^{k+1}\rVert_{F}^{2}\leq \sum_{i=1}^{d} \epsilon_i^2.
\end{equation}
Computing the square root of this term yields \eqref{eq:errorboundtheorem} and thus concludes our proof.
\end{proof}
A straightforward corollary shows that TT-ICE can then ensure that the compression of the new data point can be achieved to any desired tolerance. 
\begin{corollary}\label{cor:relativeerror}
    Let $\{U_{i}^{k}\}_{i=1}^{d+1}$ denote the unfolded TT-cores of the approximate accumulation $\hat{\mathcal{X}}^k$, $\mathcal{Y}^{k+1}$ be a new streamed tensor. If $\epsilon_{i} = \epsilon=\varepsilon_{des}\|\mathcal{Y}^{k+1}\|_{F}/\sqrt{d}$,
    then 
    \begin{equation}
    \frac{\|\mathcal{Y}^{k+1}-\hat{\mathcal{Y}}^{k+1}\|_{F}}{\|\mathcal{Y}^{k+1}\|_{F}}\leq\varepsilon_{des}
    \end{equation}
    for any $\varepsilon_{des}>0.$
    
\end{corollary}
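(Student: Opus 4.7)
The plan is to derive this result as an almost immediate consequence of Theorem~\ref{thm:errorbound}. First I would invoke Theorem~\ref{thm:errorbound}, which gives the absolute error bound
$$
\|\mathcal{Y}^{k+1} - \hat{\mathcal{Y}}^{k+1}\|_F \leq \sqrt{\sum_{i=1}^{d} \epsilon_i^2}.
$$
Since the hypothesis fixes every truncation tolerance to the common value $\epsilon_i = \epsilon = \varepsilon_{des}\|\mathcal{Y}^{k+1}\|_F/\sqrt{d}$, the sum under the radical collapses to $d \cdot \epsilon^2 = \varepsilon_{des}^2 \|\mathcal{Y}^{k+1}\|_F^2$, whose square root is $\varepsilon_{des}\|\mathcal{Y}^{k+1}\|_F$.

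Dividing both sides by $\|\mathcal{Y}^{k+1}\|_F$ (which is nonzero since otherwise the claim is trivial) then yields the relative bound $\|\mathcal{Y}^{k+1}-\hat{\mathcal{Y}}^{k+1}\|_F / \|\mathcal{Y}^{k+1}\|_F \leq \varepsilon_{des}$, as desired. There is no real obstacle here, since the prescribed choice of $\epsilon_i$ is engineered precisely so that $\sqrt{d}$ in the denominator cancels the $\sqrt{d}$ appearing when the $d$ equal squared tolerances are summed under the square root in Theorem~\ref{thm:errorbound}. The only minor subtlety worth noting is that the choice of $\epsilon_i$ requires access to $\|\mathcal{Y}^{k+1}\|_F$, which is available in the streaming setting since $\mathcal{Y}^{k+1}$ is fully known when the update step begins.
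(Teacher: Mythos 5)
Your proposal is correct and follows exactly the paper's own argument: both invoke Theorem~\ref{thm:errorbound}, collapse the sum $\sum_{i=1}^{d}\epsilon_i^2$ to $d\epsilon^2=\varepsilon_{des}^2\|\mathcal{Y}^{k+1}\|_F^2$ under the stated choice of $\epsilon_i$, and divide by $\|\mathcal{Y}^{k+1}\|_F$. No gaps; your remark about the norm being computable in the streaming setting is a reasonable (if unnecessary) addition.
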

\begin{proof}
    The proof is a direct result of Theorem~\ref{thm:errorbound}. If we assume $\epsilon_i=\epsilon$ for all $i$ and simply set
	\begin{equation}
	    \sqrt{\sum_{i=1}^{d}\epsilon_i^2}=\epsilon\sqrt{d}=\varepsilon_{des}\|\mathcal{Y}^{k+1}\|_{F},
	\end{equation}
we can assure the desired relative error upper bound $\varepsilon_{des}$ by truncating the SVD on Line~\ref{alg:line:truncatedsvd} of TT-ICE at $\epsilon=\varepsilon_{des}\|\mathcal{Y}^{k+1}\|_{F}/\sqrt{d}$.
\end{proof}
The last step is to ensure that the update of $\mathcal{X}^k$ does not reduce the approximation error of the previously compressed elements $\mathcal{Y}^{i}$ for $i \leq k$.

\begin{theorem}\label{thm:maintainerrorbound}
    Let $\{U_{i}^{k}\}_{i=1}^{d+1}$ denote the unfolded TT-cores of the approximate accumulation $\hat{\mathcal{X}}^k$ such that the approximation of any existing tensor $\mathcal{Y}^{\ell}$ satisfies $\lVert \mathcal{Y}^{\ell}-\hat{\mathcal{Y}}^{{\ell}}\rVert_{F} \leq \epsilon \lVert\mathcal{Y}^{\ell}\rVert_F$ for $\ell=1,\ldots,k$, and $\epsilon > 0$. Let $\mathcal{Y}^{k+1}$ be a new streamed tensor, and $\left\{\epsilon_i\right\}_{i=1}^d$ be the SVD truncation tolerances; then the updated TT-cores $\{U_{i}^{k+1}\}_{i=1}^{d+1}$ computed by TT-ICE represent an approximate accumulation $\hat{\mathcal{X}}^{k+1}$ that still satisfies $\lVert \mathcal{Y}^{\ell}-\hat{\mathcal{Y}}^{{\ell}}\rVert_{F} \leq \epsilon \lVert\mathcal{Y}^{\ell}\rVert_F.$ for $\ell = 1,\ldots,k.$
\end{theorem}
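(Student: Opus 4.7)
My approach is to show that the updated cores $\{U_i^{k+1}\}_{i=1}^{d+1}$, when used to reconstruct any previously compressed tensor $\mathcal{Y}^{\ell}$ with $\ell \leq k$, produce the \emph{identical} approximation $\hat{\mathcal{Y}}^{\ell}$ as the pre-update cores did. Once that is established, the hypothesized bound $\lVert \mathcal{Y}^{\ell} - \hat{\mathcal{Y}}^{\ell}\rVert_F \leq \epsilon \lVert \mathcal{Y}^{\ell}\rVert_F$ transports verbatim to the updated decomposition, and no separate error analysis is needed.

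\textbf{Key structural observation.} TT-ICE never overwrites existing core entries; it only appends. Concretely, $U_1^{k+1}=[U_1^k \mid U_{R_1}^k]$, and for $2 \leq i \leq d$ the update has the block form $U_i^{k+1}=[\,U_i^{k,\mathrm{pad}} \mid U_{R_i}^k\,]$ where $U_i^{k,\mathrm{pad}}$ is obtained from $U_i^k$ by appending $r_{R_{i-1}}$ zero rows in the reshaped representation (Equation~\eqref{eq:ukpad}). Likewise $U_{d+1}^{k+1}=[\,U_{d+1}^{k,\mathrm{pad}} \mid Y_{d+1}\,]$, with $U_{d+1}^{k,\mathrm{pad}}$ obtained by appending zero rows to $U_{d+1}^k$. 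Thus for every core, the first $r_i^k$ output-rank slices (columns in the reshaped view, or the leading $\alpha_i \leq r_i^k$ portion of $\mathcal{G}_i^{k+1}$) coincide with the padded version of the corresponding $U_i^k$, and the first $r_{i-1}^k$ input-rank slices of the padded core agree with $\mathcal{G}_i^k$ while the remaining rows are zero.

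\textbf{Inductive contraction argument.} The columns of $U_{d+1}^{k+1}$ corresponding to $\mathcal{Y}^{\ell}$ lie entirely in $U_{d+1}^{k,\mathrm{pad}}$ (the appended block $Y_{d+1}$ is only used for $\mathcal{Y}^{k+1}$), so each such column has the form $\bigl[\,g^{\ell};\,0\,\bigr]^T$ with $g^{\ell} \in \reals^{r_d^k}$ the original coefficient vector. When this column is contracted with $\mathcal{G}_d^{k+1}$, only the first $r_d^k$ output-rank slices contribute; by the observation above these equal $\mathcal{G}_d^{k,\mathrm{pad}}$, whose bottom $r_{R_{d-1}}$ input-rank rows vanish. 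Propagating this zero pattern back through the contraction with $\mathcal{G}_{d-1}^{k+1}$ shows that only its first $r_{d-1}^k$ output-rank slices contribute, and these equal $\mathcal{G}_{d-1}^{k,\mathrm{pad}}$. Iterating down to $\mathcal{G}_1^{k+1}$, only its first $r_1^k$ columns participate, and these coincide with the unchanged $\mathcal{G}_1^k$. The resulting contraction therefore matches, entry for entry, the contraction of the pre-update cores $\{U_i^k\}_{i=1}^{d+1}$ with coefficient $g^{\ell}$, giving the same $\hat{\mathcal{Y}}^{\ell}$ and hence the same error.

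\textbf{Main obstacle.} The argument is conceptually short, but the bookkeeping between the reshaped matrix form $U_i^{k,\mathrm{pad}}$ (with zero \emph{rows} appended) and the three-way core $\mathcal{G}_i^{k+1}$ (with the corresponding zeros sitting in specific input-rank slices) is the step most likely to be confusing, so I would state a short lemma establishing that the padding~\eqref{eq:ukpad} kills exactly the indices $\alpha_{i-1}>r_{i-1}^k$ in the core representation, and then invoke it at each step of the induction. Nothing else in the proof requires any new inequality: the theorem is essentially a statement that the update respects a nested block-triangular structure which leaves the old approximations fixed.
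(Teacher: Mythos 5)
Your proposal is correct and matches the paper's own argument: the paper likewise observes that TT-ICE only appends columns and zero-pads rows, writes each updated slice in the block form $\bigl[\begin{smallmatrix}\mathcal{G}_j^k[i_j] & L_{j,1}\\ 0 & L_{j,2}\end{smallmatrix}\bigr]$ (Equation~\eqref{eq:updatedslices}), and shows the zeros cancel all new terms in the slice product so that $\hat{\mathcal{Y}}^{\ell}$ is reproduced exactly. Your backward propagation of the zero pattern from the last core is just the same block-triangular cancellation carried out right-to-left instead of left-to-right.
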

\begin{proof}

    The proof shows simply that the core modifications performed by TT-ICE have no impact on the representation of earlier tensors. First recall from~\eqref{eq:last_core_latent}
    that the approximation of the $\ell$-th tensor is given by
    \begin{equation}
        \hat{\mathcal{Y}}^{\ell} = \mathcal{G}_{1}^{k} \  \tenprod{3}{1} \  \cdots \tenprod{3}{1} \  \mathcal{G}^{k}_{d} \  \tenprod{3}{1}  \mathcal{G}^{k}_{d+1}\left(\mathcal{Y}^{\ell}\right), \quad \ell = 1,\ldots,k,
        \label{eq:temp1}
    \end{equation}
    where we slightly abuse the notation by letting $\mathcal{G}^{k}_{d+1}\left(\mathcal{Y}^{\ell}\right)$ denote the columns of $\mathcal{G}^{k}_{d+1}$ corresponding to $\mathcal{Y}^{\ell}$.

    First, we note that TT-ICE only appends columns to the last core, so that the columns corresponding to the $\ell$-th tensor are unchanged when $\mathcal{G}^{k}_{d+1}$ is updated to $\mathcal{G}^{k+1}_{d+1}$. Thus it remains to show that the portions of $\mathcal{G}_{i}^k$ that multiply together and finally multiply the corresponding columns of $\mathcal{G}_{d+1}^k$ remain unchanged as the $U_{i}^k$ are updated to $U_{i}^{k+1}.$ 

    Let $G_{i}^k[i_1] \in \reals^{r_{i-1} \times r_{i}}$ denote the slice of each TT-core for $i=1\ldots n_{i}.$  Then~\eqref{eq:temp1} can be written for each element of $\mathcal{Y}^\ell$ according to
     \begin{equation}
		\hat{\mathcal{Y}}^{\ell}(i_1,\dots,i_d, :\mathcal{Y}^\ell)=\mathcal{G}_{1}^{k}[i_1]\mathcal{G}_{2}^{k}[i_2]\cdots\mathcal{G}_{d}^{k}[i_d]\mathcal{G}_{d+1}^{k}[:\mathcal{Y}^\ell],
	\end{equation}
 where $[:\mathcal{Y}^{\ell}]$ is used to denote the columns of the last core pertaining to $\hat{Y}^{\ell}$ so that $\mathcal{G}_{d+1}^{k}[:\mathcal{Y}^\ell] \equiv \mathcal{G}^{k}_{d+1}\left(\mathcal{Y}^{\ell}\right).$ Next, the zero padding and column appending of each $U_{i}^k$ imply the following relationship between $G_{i}^k$ and $G_{i}^{k+1}$
    \begin{equation}
	   \mathcal{G}^{k+1}_{j}[i_j]=
    	\begin{cases}
    	   	\mat{\mathcal{G}_{1}^{k}[i_1]& L_{1}} &,\ j=1;\\ \\
    		\mat{\mathcal{G}_{j}^{k}[i_j]&L_{j,1}\\0&L_{j,2}}& ,\ j=2,\dots d;\\ \\
    		\mat{\mathcal{G}_{d+1}^{k}[i_{d+1}]\\0}&,\ j=d+1,
    	\end{cases}
    	\label{eq:updatedslices}
    \end{equation}
    where the zeros in the lower left block come from the padding, and the 
    $L_{j,l}$ matrices arise from appending the new directions.
    Now the representation of the $\ell$-th tensor in the stream with the new cores becomes
    \begin{equation}\label{eq:updateequalitydcores}
    	\hat{\mathcal{Y}}^{\ell}=\mat{\mathcal{G}_{1}^{k}[i_1]& L_{1}}\mat{\mathcal{G}_{2}^{k}[i_2]&L_{2,1}\\0&L_{2,2}}\cdots\mat{\mathcal{G}_{d}^{k}[i_{d}]&L_{d,1}\\0&L_{d,2}}\mat{\mathcal{G}_{d}^{k}[i_{d+1}]\\0}=\mathcal{G}_{1}^{k}[i_1]\mathcal{G}_{2}^{k}[i_2]\cdots\mathcal{G}_{d}^{k}[i_{d+1}]=\hat{\mathcal{Y}}^{\ell},
    \end{equation}
    where the locations of the zeros enable the new terms to appropriately cancel all the new terms. Thus the representation of previously compressed tensors does not change, and therefore their reconstruction error remains the same after the core update.
\end{proof}

\subsection{Heuristics to improve performance}\label{sec:heuristics}
This section discusses three heuristic modifications to TT-ICE to reduce the computational load and memory requirements of our approach. 

These approaches aim to reduce the number of core updates between $\mathcal{X}^k$ and $\mathcal{X}^{k+1}.$ First, we provide a metric based on TT-ranks to decide whether an attempt at updating a TT-core should even be made. Second, when a new batch of $\newdim^{k+1}$ tensors is presented at increment $k+1$, we describe an approach to subselect from this batch to only perform an update with a smaller number of tensors. This update is based on each of the approximation errors of individual tensors in the streamed batch. Third, we propose a mechanism to determine when it is feasible to skip the core updating process.

\paragraph{Core occupancy for reducing the number of cores to be updated}
The TT-ICE Algorithm~\ref{alg:TT-ICE} proposes a scheme for updating all the TT-cores $\{\mathcal{G}^{k}_{i}\}_{i=1}^{d+1}$ at each increment. However, the new information provided by increment may not be uniform throughout all these TT-cores. If we can detect the core where the most information loss occurs, focusing update efforts on that core could increase the efficiency of Algorithm~\ref{alg:TT-ICE}.

We propose a heuristic named \textit{core occupancy} to measure how much information is already represented by a core to determine if it should be updated. Recall that the $i$-th TT-core is constructed by reshaping the left singular vectors $U_{i}^{k}\in\R{r^{k}_{i-1}n_{i}\times r_{i}^{k}}$. As a result, there are $r^{k}_{i}$ orthogonal vectors from the possible $r^{k}_{i-1}n_{i}$ basis vectors. For a given TT-core, core occupancy represents the ratio of the truncation rank $r^{k}_{i}$ over the maximum rank possible (the number of rows $r^{k}_{i-1}n_{i})$. It can also be seen as a ratio of the column to row-ranks:
\begin{equation}\label{eq:coreoccupancy}
\texttt{occupancy}\left(\mathcal{G}^{k}_{i}\right)=\frac{r^{k}_{i}}{r^{k}_{i-1}n_{i}}.
\end{equation}
If a core has a high occupancy ratio, then we can skip its update.

\paragraph{Subselecting the observations of $\mathcal{Y}^{k+1}$ used to update $\{\mathcal{G}^{k}_{i}\}_{i=1}^{d+1}$}
If the new data comes in a batch (i.e. $\newdim^{k+1}>1$), another way of reducing the computational cost of updating the TT-cores is to use a reduced number of observations from the new tensor $\mathcal{Y}^{k+1}$ to update the first $d$ cores of the accumulation tensor. To subselect which elements of the batch to use for the update, we first compute the projection of $\mathcal{Y}^{k+1}$ onto TT-cores $\{\mathcal{G}^{k}_{i}\}_{i=1}^{d+1}$ and use the relative projection error of the individual observations as a heuristic to select a subset of $\mathcal{Y}^{k+1}$. With a slight abuse of notation, we denote the $i$-th observation in $\mathcal{Y}^{k+1}$ as $\mathcal{Y}^{k+1}(i)$, where $i=1,\ldots, \newdim^{k+1}.$ Let $\tilde{\mathcal{Y}}^{k+1}$ be the approximation of $\mathcal{Y}^{k+1}$ using the TT-cores $\{\mathcal{G}^{k}_{i}\}_{i=1}^{d+1}$. Similarly, let $\tilde{\mathcal{Y}}^{k+1}(i)$ denote the $i$-th observation in $\tilde{\mathcal{Y}}^{k+1}$. Then, the vector $\varepsilon_{\mathcal{Y}^{k+1}}\in\R{\newdim^{k+1}}$ is vector of relative errors of individual observations, where
\begin{equation}\label{eq:relerror}
	\varepsilon_{\mathcal{Y}^{k+1}}(i)=\frac{\| \mathcal{Y}^{k+1}(i)-\tilde{\mathcal{Y}}^{k+1}(i) \|_F}{\|\mathcal{Y}^{k+1}(i) \|_F},
\end{equation}
for $i=1,\dots,\newdim^{k+1}$.

Recall that we can use Algorithm~\ref{alg:TT-ICE} with truncated SVD and update the TT-cores of $\hat{\mathcal{X}}^k$ to $\hat{\mathcal{X}}^{k+1}$ so that $\mathcal{Y}^{k+1}$ is represented within a truncation error threshold. Let $\varepsilon_{des}$ denote the determined relative error threshold as in Corollary~\ref{cor:relativeerror}. Before attempting an update, we calculate the approximation error of each new data point $\varepsilon_{\mathcal{Y}^{k+1}}$ with the existing TT-cores using \eqref{eq:relerror} and then compute its average, $\texttt{mean}\left(\varepsilon_{\mathcal{Y}^{k+1}}\right).$ If the average approximation error is greater than the desired error tolerance ($\texttt{mean}\left(\varepsilon_{\mathcal{Y}^{k+1}}\right)>\varepsilon_{des}$), then we will proceed to update at least some of the cores.
In particular, we only update the TT-cores of the accumulation using the data points for which the approximation error exceeds $\varepsilon_{des}$. Let $\mathcal{D}^{k+1}$ be that constructed new $(d+1)$-way tensor with a reduced number of observations such that
\begin{equation}\label{eq:subselectheuristic}
	\mathcal{D}^{k+1}=\left\{\mathcal{Y}^{k+1}(i):\ \varepsilon_{\mathcal{Y}^{k+1}}(i)>\varepsilon_{des}\right\}_{i=1}^{\newdim^{k+1}}.
\end{equation}

Since the algorithm operates on a subset of observations, the truncation parameter needs to be adjusted accordingly. Using the same $\varepsilon_{des}$ as $\mathcal{Y}^{k+1}$ for $\mathcal{D}^{k+1}$ enforces a tighter relative error upper bound on $\mathcal{Y}^{k+1}$. This tighter error bound results in increased TT-ranks that impair the compression performance. To avoid this, we need to relax the truncation parameter $\varepsilon_{des}$ using the approximation error of the discarded observations. Let $\mathcal{D}_{C}^{k+1}$ be the tensor constructed with those discarded observations such that $\mathcal{D}_{C}^{k+1}=\left\{\mathcal{Y}^{k+1}(i):\ \varepsilon_{\mathcal{Y}^{k+1}}(i)\leq\varepsilon_{des}\right\}_{i=1}^{\newdim^{k+1}}$. Then, we compute the relaxed relative error tolerance $\varepsilon_{upd}$ for the tensor of selected observations $\mathcal{D}^{k+1}$ as 
\begin{equation}\label{eq:epsilonmodification}
	\varepsilon_{upd} = \sqrt{ \frac{\left(\varepsilon_{des}\|\mathcal{Y}^{k+1}\|_{F}\right)^{2}-\|\mathcal{D}^{k+1}_{C}-\tilde{\mathcal{D}}^{k+1}_{C}\|_{F}^{2}}{\|\mathcal{D}^{k+1}\|_{F}^{2}} },
\end{equation}
where $\tilde{\mathcal{D}}^{k+1}_{C}$ is the approximation of $\mathcal{D}_{C}^{k+1}$ using $\{U^{k}_{i}\}_{i=1}^{d+1}$. The first term in the numerator is the maximum amount of error that TT-ICE can allow for $\mathcal{Y}^{k+1}$ in the Frobenius norm, the second term is the approximation error of the discarded observations in the Frobenius norm, and the denominator is the Frobenius norm of the tensor of selected observations. Since the discarded observations have a relative error $\leq \varepsilon_{des}$, subtracting the approximation error gives how much error TT-ICE can allow if it only uses $\mathcal{D}^{k+1}$ to update the TT-cores. The denominator functions as a normalizing factor, converting the error in the numerator to a relative error for incremental updates. Through $\varepsilon_{upd}$, the low approximation error of the discarded observations will be balanced by tolerating slightly more error for $\mathcal{D}^{k+1}$ and the overall approximation $\hat{\mathcal{Y}}^{k+1}$ will have a relative error closer to $\varepsilon_{des}$. Note that for the edge case where all $\varepsilon_{\mathcal{Y}^{k+1}}(i)>\varepsilon_{des}$, we have $\mathcal{D}^{k+1}=\mathcal{Y}^{k+1}$ and $\mathcal{D}^{k+1}_{C}=\emptyset$. This results in $\varepsilon_{upd}=\varepsilon_{des}$, therefore providing a consistent method to update $\varepsilon_{des}$.

If the observations in the same batch have similar norms, we can approximate Eq.~\eqref{eq:epsilonmodification} by replacing the norm operators with the count of observations in each tensor $|\cdot|$ and get
\begin{equation}\label{eq:epsilonmodification2}
	\varepsilon_{upd}\approx\frac{\varepsilon_{des}\newdim^{k+1}-\varepsilon_{\mathcal{D}_C^{k+1}}|\mathcal{D}_{C}^{k+1}|}{|\mathcal{D}^{k+1}|},
\end{equation}
 where $\varepsilon_{\mathcal{D}_C^{k+1}}$ is the mean relative error of the observations in $\mathcal{D}_{C}^{k+1}$ computed analogously to \eqref{eq:relerror}.
 
 Through these modifications, we prevent a superfluous increase in TT-ranks. After determining $\varepsilon_{upd}$, we simply compute the truncation parameter for the SVD using $\varepsilon_{upd}$ as $\epsilon=\frac{\varepsilon_{upd}}{\sqrt{d}}\|\mathcal{D}^{k+1}\|_F$.

\paragraph{Skip updating the first $d$ TT-cores}
Our final heuristic is to skip updating the cores if the average error of the tensors in a batch is less than the desired threshold: $\texttt{mean}\left(\varepsilon_{\mathcal{Y}^{k+1}}\right)\leq\varepsilon_{des}$. 
%A final and more intuitive approach to increase algorithmic efficiency \AGcomment{happens when}{what happens? This should be more clear. if this condition holds then are you not updating? Is this the same $\epsilon$ as in general? Or is it somehow related to $\epsilon_i$?} $\texttt{mean}\left(\varepsilon_{\mathcal{Y}^{k+1}}\right)\leq\varepsilon$. 
We justify this heuristic via the following argument.

	Let $\varepsilon_{k+1}$ represent the relative error of approximating the new tensor via the existing cores
	\begin{equation}\label{eq:batchrelativeerror}
		\varepsilon_{k+1}=\frac{\|\mathcal{Y}^{k+1}-\tilde{\mathcal{Y}}^{k+1}\|_{F}}{\|\mathcal{Y}^{k+1}\|_{F}}=\sqrt{\frac{\sum_{i=1}^{n^{k+1}_{d+1}}\|\mathcal{Y}^{k+1}(i)-\tilde{\mathcal{Y}}^{k+1}(i)\|_{F}^{2}}{\sum_{i=1}^{n^{k+1}_{d+1}}\|\mathcal{Y}^{k+1}(i)\|_{F}^{2}}},
	\end{equation}
	where $\tilde{\mathcal{Y}}^{k+1}$ is the approximation of $\mathcal{Y}^{k+1}$ with the existing TT-cores. If $\varepsilon_{k+1}\leq \varepsilon_{des}$, then the first $d$ TT-cores of $\mathcal{X}_k$ can represent $\mathcal{Y}^{k+1}$ sufficiently accurately and do not need updates to meet the desired accuracy. As a result, the error-truncated SVD in Line 13 of \Cref{alg:TT-ICE} returns empty matrices and TT-ICE will return the first $d$ TT-cores without an update. In order to save invaluable computation time, we can calculate $\varepsilon_{k+1}$ using \eqref{eq:batchrelativeerror} and complete the core update by appending the TT-representation of $\mathcal{Y}^{k+1}$ after projecting onto the TT-cores of $\mathcal{X}^{k}$.
	
%	We compute the TT-representation of $\mathcal{Y}^{k+1}$ by projecting onto the existing TT-cores $\mathcal{G}^{k}$.
%	append that directly to the $(d+1)$-th TT-core, and obtain the TT-cores of $\mathcal{X}^{k+1}$.
	Let $\hat{Y}^{k+1}$ be the projection of $\mathcal{Y}^{k+1}$ onto the first $d$ TT-cores $\{\mathcal{G}^{k}_{i}\}_{i=1}^{d}$. Then, updating $\mathcal{G}^{k}_{d+1}$ is simply done by appending $\hat{Y}^{k+1}$ to $\mathcal{G}_{d+1}^{k}$ as
	\begin{equation}\label{eq:appendupdate}
	 	\mathcal{G}_{d+1}^{k+1}\gets\mat{\mathcal{G}_{d+1}^{k}&\hat{Y}^{k+1}}.
	\end{equation}
%	Through this step, we prevent unnecessary computations of truncated SVD of residuals.

	However, an explicit computation of \eqref{eq:batchrelativeerror} can become computationally expensive if the batch consists of a high number of observations. In that case, we investigate the use of the surrogate of $\varepsilon_{\mathcal{Y}^{k+1}}$ from \eqref{eq:relerror} as an approximation to $\varepsilon_{k+1}$. In future works, this strategy might be adapted to sampling tensors in the batch to have a stochastic approximation of the average. Meanwhile, the numerical experiments in~\cref{sec:experiments} use $\texttt{mean}(\varepsilon_{\mathcal{Y}^{k+1}})$ to determine if TT-ICE${}^{*}$ should skip updating the first $d$ TT-cores. As an example, \Cref{fig:mspacmanerror} provides empirical proof that using the mean as an approximation does not result in a violation of the relative error upper bound. However, note that there might be other, more conservative heuristic measures available to approximate $\varepsilon_{k+1}$.

The pseudocode of the modified algorithm is provided in Algorithm~\ref{alg:coreupdateheuristic}. The \texttt{project} function on Line~\ref{alg:line:projectfunction} projects $\mathcal{Y}^{k+1}$ sequentially onto $\{U^{k+1}_{i}\}_{i=1}^{d}$ and reshapes into proper dimensions in a way similar to Line~\ref{alg:line:project}.

\begin{algorithm}[h!]
	\caption{\texttt{TT-ICE$^*$}: Incremental update of a tensor train decomposition with heuristic performance upgrades}
	\begin{algorithmic}[1]
	    \Input 
		\Desc{$\left\{U^{k}_i\right\}_{i=1}^{d+1}$}{$\quad$ reshaped cores of the TT-decomposition of the accumulation $\mathcal{X}^k$} 
		\Desc{$\mathcal{Y}\in\R{n_1\times\cdots\times n_d\times \newdim^{k+1}}$}{$\quad$ new tensor}
		\Desc{$\tau$}{$\quad$ occupancy threshold (suggested=0.8)}
		\Desc{$\varepsilon_{des}$}{$\quad$ relative error upper bound}
		\EndInput
		\Output 
		\Desc{$\left\{U^{k+1}_i\right\}_{i=1}^{d+1}$}{$\quad$ updated cores for the accumulation $\mathcal{X}^{k+1}$} 
		\EndOutput
		\State \textcolor{grey}{\textbf{Check representation accuracy with existing TT-cores and \textit{Skip} updating if sufficient}}
		\State $\varepsilon_{\mathcal{Y}}(i)\gets\left.\frac{\|\mathcal{Y}(i)-\tilde{\mathcal{Y}}(i)\|_{F}}{\|\mathcal{Y}(i)\|_{F}}\right|_{i=1}^{\newdim^{k+1}}$\Comment{$\tilde{\mathcal{Y}}$ is the approximation of $\mathcal{Y}$ using $U^{k}$, and $\varepsilon_{\mathcal{Y}}\in\reals^{\newdim^{k+1}}$ as defined in \eqref{eq:relerror}}
		\If{\texttt{mean}$(\varepsilon_{\mathcal{Y}})\leq\varepsilon_{des}$}
		\State $\{U^{k+1}_{i}\}_{i=1}^{d}\gets\{U^{k}_{i}\}_{i=1}^{d}$
		\Else
		\State \textcolor{grey}{\textbf{\textit{Subselect} observations}}
		\State $\mathcal{D}\gets\left\{\mathcal{Y}(j):\ \varepsilon_{\mathcal{Y}}(j)>\varepsilon_{des}\right\}_{j=1}^{\newdim^{k+1}}$ \Comment{$\mathcal{D}\subset\mathcal{Y}$ will be used at update}
		\State $	\varepsilon_{upd} \gets \sqrt{ \frac{\left(\varepsilon_{des}\|\mathcal{Y}\|_{F}\right)^{2}-\left(\|\mathcal{D}_{C}-\tilde{\mathcal{D}}_{C}\|_{F}\right)^{2}}{\|\mathcal{D}\|_{F}^{2}} }$ \Comment{$\mathcal{D}\cup\mathcal{D}_{C}=\mathcal{Y}$, $\tilde{\mathcal{D}}_{C}$ is approximation of $\mathcal{D}_{C}$ using $U^{k}$}
		\State $\epsilon=\frac{\varepsilon_{upd}}{\sqrt{d}}\|\mathcal{D}\|_F$ \Comment{$\epsilon$ is the truncation parameter for SVD}
		\State \textcolor{grey}{\textbf{Perform incremental updates with the selected observations}}
		\State $D_{1}=\texttt{reshape}(\mathcal{D},[n_{1},n_{2}\dots n_{d}\newdim^{\mathcal{D}}])$ \Comment{$\newdim^{\mathcal{D}}$ is the number of selected observations}
		\State $U_{1}^{k,pad}\gets U_{1}^{k}$ \Comment{First core has no padding}
		
		\For{$i=1$ to $d$}
        \State \textcolor{grey}{\textbf{Check core \textit{Occupancy}}}
		\If{\texttt{occupancy}$(U_{i}^{k,pad})\geq\tau$}
       \label{alg:line:loopstart}
		\State $U_{i}^{k+1}\gets U_{i}^{k,pad}$
		\State $r_{i}^{k+1}\gets r_{i}^{k}$
		\State $U_{i+1}^{k,pad}\gets U_{i+1}^{k}$
		\Else
        \State \textcolor{grey}{\textbf{Perform incremental update}}
		\State $R^{k}_{i}=\left(I-U_{i}^{k,pad}U_{i}^{k,pad^T}\right)D_{i}$
		\State $U_{R_i}^{k}\gets \texttt{SVD}\left(R_{i}^{k},\epsilon\right)$
		\State $U_i^{k+1}\gets \mat{U_{i}^{k,pad}&U_{R_i}^{k}}$
		\State $U_{i+1}^{k,pad}\gets \texttt{reshape}\left(\mat{\texttt{reshape}\left(U_{i+1}^{k} ,\left[ r_{i}^{k}, n_{i+1}r_{i+1}^{k}\right]\right)\\\mathbf{0}_{r_{R_i}\times n_{i+1}r_{i+1}^{k}}},[r_{i}^{k+1}n_{i+1},r_{i+1}^{k}]\right)$
		\EndIf
		\State $D_{i+1}\gets \texttt{reshape}(U_{i}^{k+1^T}D_{i},[r_{i}^{k+1}n_{i+1},n_{i+2}\dots \newdim^{\mathcal{D}}])$ \label{alg:line:project}
		\EndFor
		\EndIf
		\State $\hat{Y}^{k+1}\gets\texttt{project}(\mathcal{Y}^{k+1},\{U^{k+1}_{i}\}_{i=1}^{d})$ \Comment{$\hat{Y}$ is obtained by sequentially projecting $\mathcal{Y}^{k+1}$ onto $\{U^{k+1}_{i}\}_{i=1}^{d}$} \label{alg:line:projectfunction}
		\State $U^{k+1}_{d+1}\gets \mat{U^{k}_{d+1}&\hat{Y}^{k+1}}$ \Comment{$\hat{Y}^{k+1}\in\reals^{r_{d}\times n_{d+1}^{k+1}}$ are the columns }
	\end{algorithmic}
	\label{alg:coreupdateheuristic}
\end{algorithm}

\section{Experiments}
\label{sec:experiments}

In this section, we compare the performance of TT-ICE and TT-ICE$^*$ with existing approaches in compressing large-scale data including videos from gameplay sequences and grid data from physics-based simulations. 
In all the experiments, the TT-SVD algorithm~\cite[Alg 1]{Oseledets2011} is applied to obtain an initial set of TT-cores and subsequently, the cores are updated using each of the incremental algorithms. In all of the results provided, $\text{ITTD}k$ corresponds to updating the TT-cores of the accumulation using ITTD and performing TT-rounding~\cite[Alg 2]{Oseledets2011} after every $k$-th update step. 
For comparisons between TT-FOA and our approach, we refer to~\Cref{app:ttfoacomparison}. We do not include it here because TT-FOA does not support rank adaptation and does not offer an upper bound on approximation error. As a result, it either performed poorly or was unable to handle the datasets considered.

\subsection{Evaluation criteria}\label{sec:evaluationcritera}
This section contains performance metrics we use to compare performance of algorithms.

The performance of the incremental algorithms is evaluated using three main criteria. First, the {\em compression ratio} (CR) of the accumulation $\mathcal{X}^{k}$ with core sizes $n_1\times n_2\times\cdots\times n_{d+1}$ is defined as 
\begin{equation}\label{eq:compressionratio}
	CR=\frac{\text{number of elements of full tensor}}{\text{number of parameters in compressed representation}} = \frac{\prod_{i=1}^{d+1}n_i}{\sum_{i=1}^{d+1}r_{i-1}n_{i}r_{i}},
\end{equation}
where $r_i$ is the $i$-th TT-rank. Second, the {\em relative reconstruction error} (RRE) of the same tensor is given by
\begin{equation}\label{eq:MRRE}
	RRE=\frac{\|\mathcal{X}^{k}-\hat{\mathcal{X}}^{k}\|_{F}}{\|\mathcal{X}^{k}\|_{F}}.
\end{equation}
Similarly, we can measure the error in the representation of {\em unseen} data: since the last dimension of $\mathcal{X}^{k}$ provides a latent representation for individual observations stored in the stream, as shown in Eq.~\eqref{eq:last_core_latent}, we can use the first $d$ cores to estimate a latent representation for unseen data by projecting onto the first $d$ dimensions.
We shall call the error in the estimation the {\em relative prediction error} (RPE), which can again be computed using the Frobenius error similar to RRE.
An RPE lower than the target tolerance $\varepsilon_{des}$ indicates that the existing basis of the accumulation is expressive enough to represent this unseen data. Our final evaluation metric is the {\em execution time}, which measures the CPU time spent executing the steps of the corresponding incremental algorithms and does not include the time for data to load into memory.

\subsection{Datasets and experiments}

This section includes tests on two different types of  datasets (snapshots in Figure~\ref{fig:examplefigures}): (i) Raw pixel data from an ATARI game-playing reinforcement learning (RL) agent \cite{chen2021behavioral} and (ii) Grid data from a numerical simulation of self-oscillating gels~\cite{Alben2019}.

\begin{figure}[h!]
\begin{subfigure}{0.48\textwidth}
	\centering
	\includegraphics[width=0.5\textwidth]{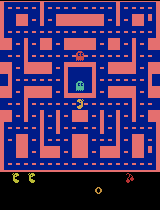}
	\caption{}
	\label{fig:mspacmanframe}
\end{subfigure}
\hfill
\begin{subfigure}{0.48\textwidth}
	\centering
	\includegraphics[width=0.8\textwidth]{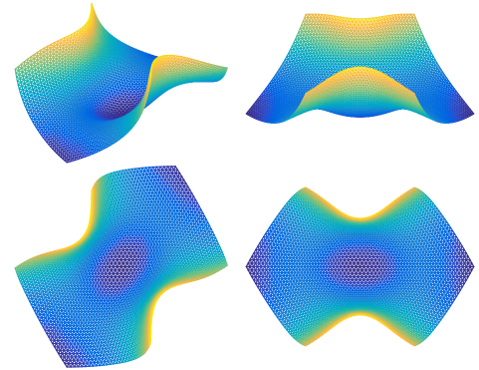}
	\caption{}
	\label{fig:examplegels}
\end{subfigure}
\mcaption{Example visualizations from the datasets used in the compression experiments. (a) A frame from a Ms.Pac-Man gameplay session. (b) Snapshots from self-oscillating gel simulation using the simulation developed in \cite{Alben2019}.}
\label{fig:examplefigures}
\end{figure}

\subsubsection{ATARI gameplay sequences}
This section includes tests of TT-ICE's performance to compress raw pixel data.

The first dataset involves a sequential set of video game screenshots. It is often useful to reduce the dimension of this type of video data to enable learning in the latent space. This dataset consists of a collection of gameplay screen captures from various ATARI games. The dataset is collected using a video game-playing RL agent trained by a Deep Q-Network model \cite{Mnih2015} from the RL-Baselines Zoo package \cite{rl-zoo} and further trained using the Stable Baselines platform \cite{stable-baselines}. The ATARI games we used are Ms.Pac-Man, Enduro, Seaquest, Q*bert, Breakout, Pong, and Beamrider. We present the results of experiments for Ms.Pac-Man game captures in this section and the rest of the games in the Appendix.

Each game has multiple individual gameplay sessions, which we refer to as \textit{runs}. Each run may have different durations and therefore a different number of frames (see Figure~\ref{fig:mspacmanframe}) with dimensions $210\times160\times3$ ($Width\times Height \times RGB$). Each individual run is treated as an incremental unit and reshaped into a 5-way tensor of dimension $30\times28\times40\times3\times n_{5}^{k}$, where $n_{5}^{k}$ is the number of frames in the $k$-th run. Depending on the duration of each gameplay sequence, each run consists of a varying number of frames. Therefore, the 5-way tensors are stacked along the fifth dimension.

We compared TT-ICE, TT-ICE$^*$, and ITTD5 on all of the different game datasets, and the results are summarized in Table~\ref{tab:atariresults}. A more detailed study of the efficiency of each algorithm was done for Ms.Pac-Man. For the comprehensive experiments with Ms.Pac-Man gameplay sequences, we used a training set of 60 runs for incremental updates and a validation set of 160 runs to measure the prediction error on unseen runs. This prediction tests the performance of the TT-format to find a suitable latent space for describing Ms.Pac-Man frames. We present the results of detailed experiments in \Cref{fig:mspacmanerror,fig:mspacmancompression,fig:mspacmantime}. In those figures, \textit{Subselect} means updating TT-cores using only the observations from $\mathcal{Y}^{k+1}$ with relative error higher than $\varepsilon_{des}$ and \textit{SS} means skipping updates for the first $d$ TT-cores when $\texttt{mean}(\varepsilon_{\mathcal{Y}^{k+1}})\leq \varepsilon_{des}$ along with \textit{Subselect} heuristic.
Please note that the \textit{Occupancy} heuristic is only used when the complete TT-ICE$^*$ algorithm is used.

For all other games in Table~\ref{tab:atariresults}, we used a training set of 60 runs and a validation set of 100 runs. Enduro and Pong were the only exceptions to that setup since they had a much higher number of frames in each run. This resulted in much higher memory requirements than all other games even just for storing the runs. Again, due to the high number of frames per run, the validation tests required a significant amount of time. Therefore we used a training set of 40 runs and a validation set of 40 runs. During the repetitions for different algorithms, the streaming order was the same for all methods subject to investigation. All experiments were repeated for two $\varepsilon_{des}$ settings, $\varepsilon_{des}=0.1$ and $\varepsilon_{des}=0.01$. Finally, an occupancy threshold of 0.8 is set for this class of experiments.

\paragraph{\textbf{Compression results}}

This section includes the results of experiments on Ms.Pac-Man frames, and then comment on the compression experiments with other ATARI games.

\Cref{fig:mspacman01time,fig:mspacman001time} show the influence of each variation of TT-ICE on the overall compression time and compare these results with ITTD5. In Figure~\ref{fig:mspacman01time}, ITTD5 performs worse than all of the TT-ICE variations at $\varepsilon_{des}=0.1$. Specifically, Figure~\ref{fig:mspacman01time} depicts that implementing heuristic improvements provides at least $62\%$ reduction in compression time. This improvement in compression time reaches its peak when all of the heuristic upgrades are implemented (i.e. when the complete TT-ICE$^*$ algorithm is used). Note that when the truncation tolerance is tightened to $\varepsilon_{des}=0.01$, the problem with ITTD intensifies. This time, ITTD5 fails to compress the full duration of the stream due to insufficient memory and fails at the $10^{th}$ increment while attempting TT-rounding. On the other hand, TT-ICE and variations of TT-ICE$^*$ display performances in parallel to the experiments with $\varepsilon_{des}=0.1$.

\begin{figure}[t]
	\begin{subfigure}{\textwidth}
		\centering
		\includegraphics[width=0.5\textwidth]{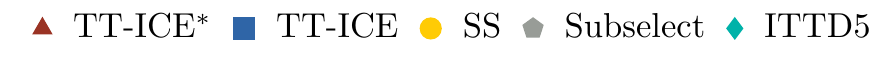}
	\end{subfigure}

\begin{subfigure}{0.49\textwidth}
	\centering
	\includegraphics[width=\textwidth]{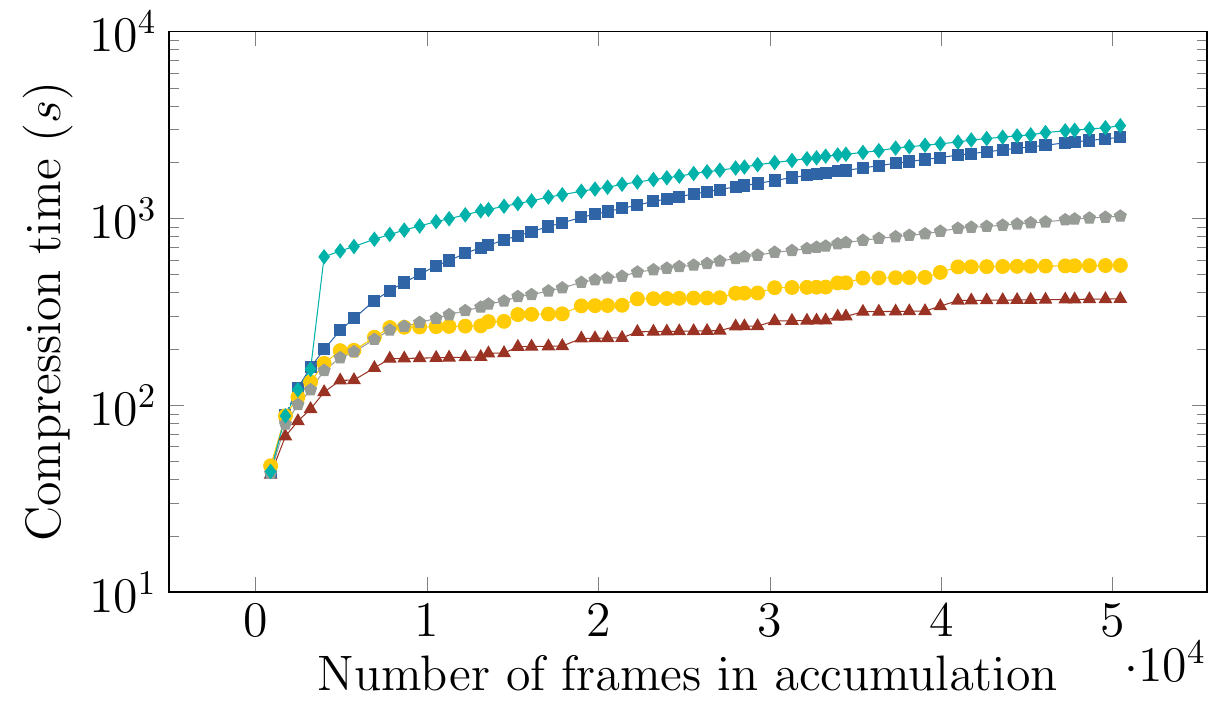}
	\caption{$\varepsilon_{des}=0.1$}
	\label{fig:mspacman01time}
\end{subfigure}
\hfill
\begin{subfigure}{0.49\textwidth}
	\centering
	\includegraphics[width=\textwidth]{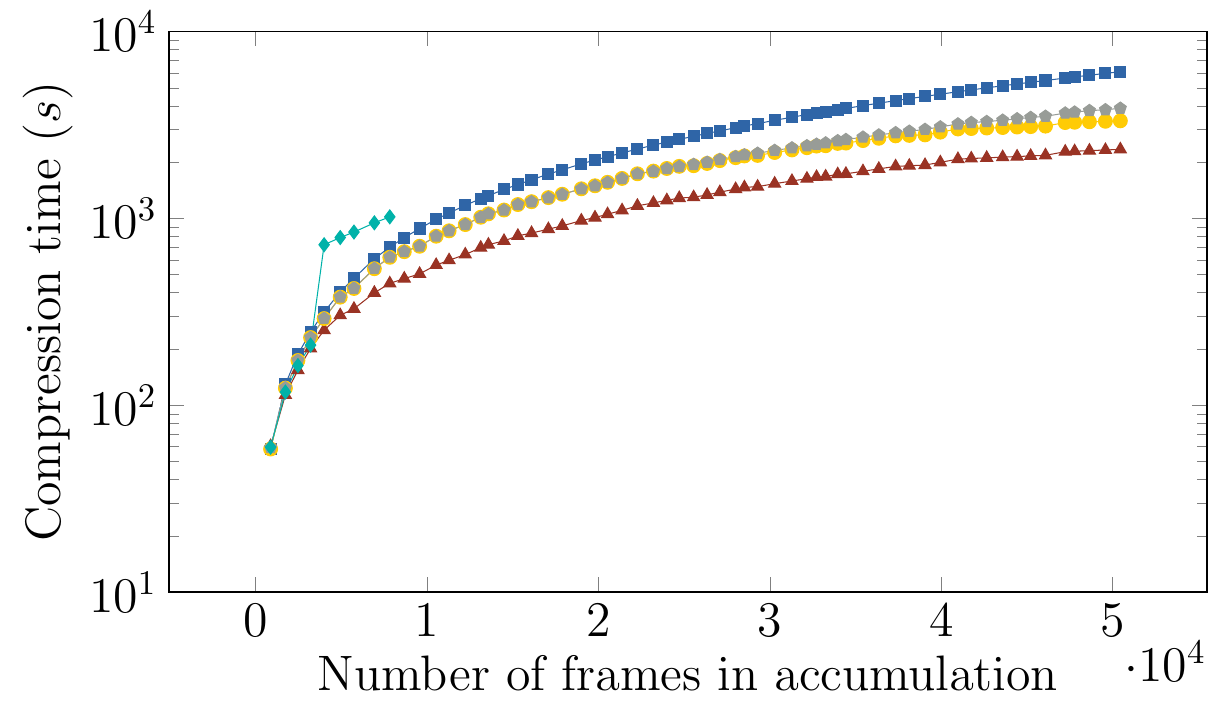}
	\caption{$\varepsilon_{des}=0.01$}
	\label{fig:mspacman001time}
\end{subfigure}
\mcaption{Comparison of execution time vs total number of observations in the tensor train for different versions of TT-ICE, TT-ICE$^*$ and ITTD\cite{Liu2018} with two different $\varepsilon$ settings. TT-ICE$^*$: full TT-ICE$^*$ algorithm with all heuristics, TT-ICE: TT-ICE, Subselect: subselecting frames from runs, SS: subselecting frames from runs and skip updating the first $d$ TT-cores, ITTD 5: ITTD with rounding at every fifth increment step.
ITTD has worst compression time while TT-ICE$^*$ with all heuristics performs best. ITTD Fails to compress the entire stream for $\varepsilon_{des}=0.01$.}
\label{fig:mspacmantime}
\end{figure}

\Cref{fig:mspacman01error,fig:mspacman001error} investigate a difference in the representation error of the TT-cores trained with different algorithms. For each case, we present the mean RRE over the compressed portion of the training set and the mean RPE over the entire validation set after each increment.
Figure~\ref{fig:mspacman01error} shows that all methods can successfully represent the streamed data within the desired relative error upper bound $\varepsilon_{des}$. Furthermore, Figure~\ref{fig:mspacman01error} depicts that the mean RPE of TT-ICE and TT-ICE$^*$ converge asymptotically to the mean RRE. 
Since the mean RPE represents the representation quality of the validation set, these results indicate that a suitable basis is found prior to observing the validation dataset itself. Once the basis to represent observations within the desired accuracy is complete, TT-ICE will not be able to find any unique orthogonal directions to expand the bases of TT-cores for a stream.

\Cref{fig:mspacman01error,fig:mspacman001error} present the mean RPE for ITTD5 only at the steps where rounding is performed and do not connect the data points with dashed line. This is caused by the fact that without reorthogonalization, TT-cores obtained through ITTD are not suitable for assimilating data (projection and prediction). This is an artifact caused by implementing addition in TT-format and the uncontrolled rank inflation and lack of core orthogonality. However, an interesting point from Figure~\ref{fig:mspacman01error} is that the mean RPE falls well below the mean RRE for ITTD5 after reorthogonalization. The results for TT-ICE and TT-ICE$^*$ are similar for $\varepsilon_{des}=0.01$ in Figure~\ref{fig:mspacman001error}. Unfortunately, ITTD5 cannot display the same pattern in this case, since it terminates prematurely due to insufficient memory.

\begin{figure}[h!]
	\begin{subfigure}{\textwidth}
		\centering
		\includegraphics[width=0.5\textwidth]{pacman-legend}
	\end{subfigure}
\begin{subfigure}{0.49\textwidth}
	\centering
	\includegraphics[width=\textwidth]{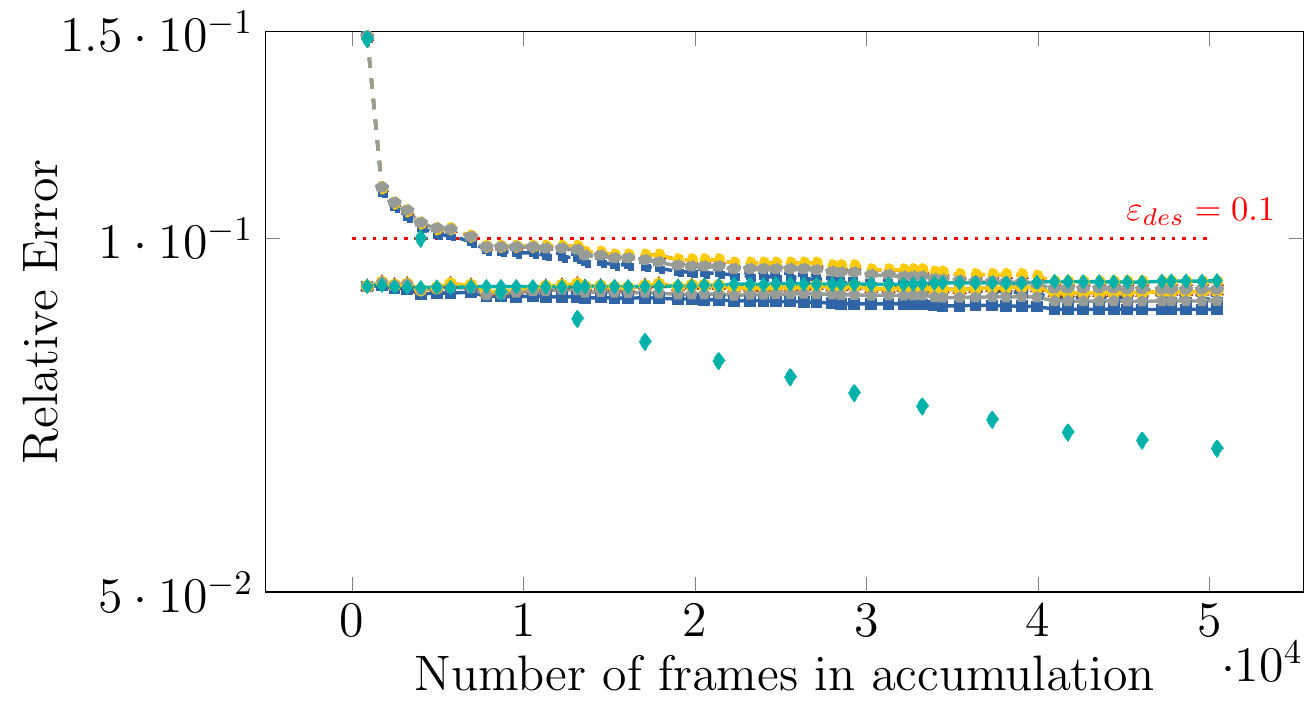}
	\caption{$\varepsilon_{des}=0.1$ (Dotted line)}
	\label{fig:mspacman01error}
\end{subfigure}
\hfill
\begin{subfigure}{0.49\textwidth}
	\centering
	\includegraphics[width=\textwidth]{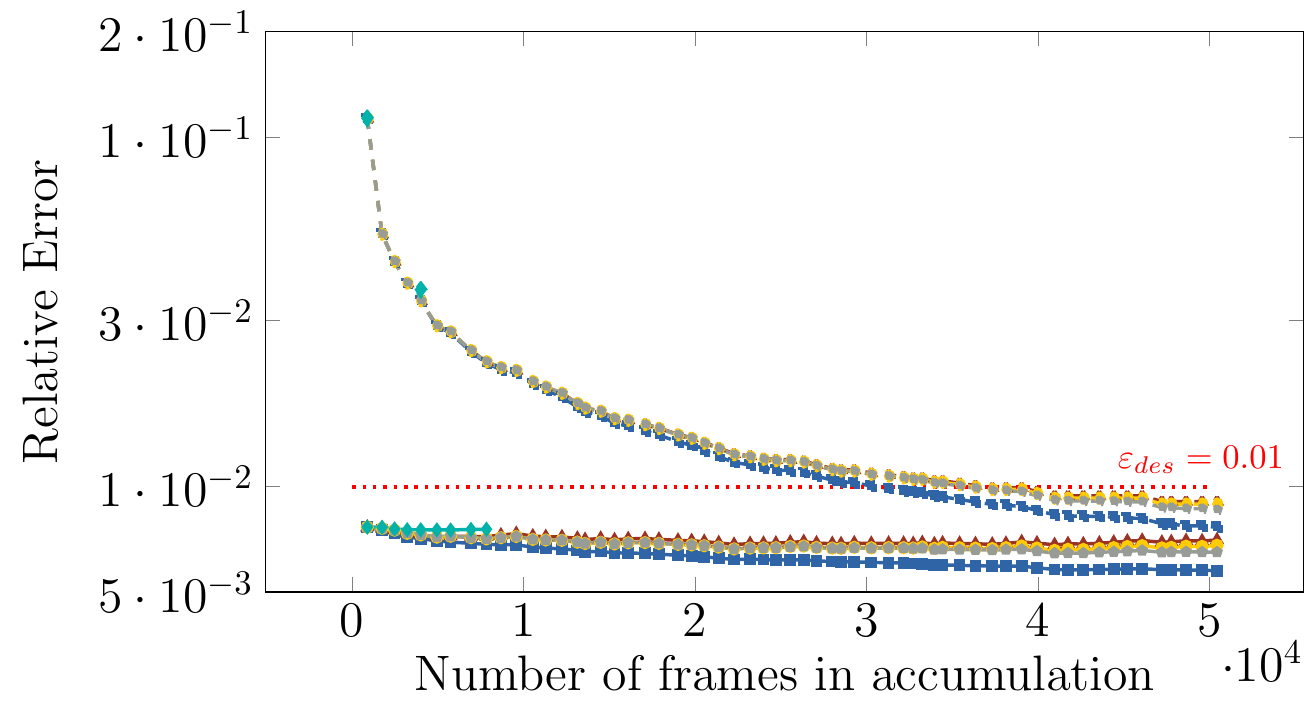}
	\caption{$\varepsilon_{des}=0.01$ (Dotted line)}
	\label{fig:mspacman001error}
\end{subfigure}
\mcaption{Comparison of mean RRE and mean RPE vs total number of observations in the tensor train for different versions of TT-ICE, TT-ICE$^*$ and ITTD\cite{Liu2018} with two different $\varepsilon$ settings. We present the mean RRE over compressed runs in the accumulation and mean RPE over the validation set after each increment step. Data points connected with a solid line represent the mean RRE and data points connected with a dashed line represent the mean RPE. Please refer to Figure~\ref{fig:mspacmantime} for a description of the legend. For $\varepsilon_{des}=0.1$ TT-cores returned from ITTD have much lower RPE than desired. RPE of TT-cores trained with TT-ICE and TT-ICE$^*$ converge to RRE as accumulation size increases. ITTD Fails to compress the entire stream for $\varepsilon_{des}=0.01$.}
\label{fig:mspacmanerror}
\end{figure}

Finally, \Cref{fig:mspacman01compression,fig:mspacman001compression} indicate differences in compression ratio between algorithms. Figure~\ref{fig:mspacman01compression} shows that the TT-ICE variations have comparable performance to each other. Furthermore, Figure~\ref{fig:mspacman01compression} shows selecting any TT-ICE variation results in almost two orders of magnitude higher compression over ITTD5. This significantly lower compression ratio of ITTD also explains its reduction in mean RPE after TT-rounding. The superfluous increase in TT-ranks allows the TT-cores trained with ITTD to cover a much larger portion of the multidimensional basis. Right after reorthogonalization, this larger basis provides an increase in the generalization capability of the TT-cores but also results in a much lower compression ratio. The rounding step also provides a positive jump in the compression ratio, but when the TT-cores are incremented further with ITTD, this improvement decays quickly. When the relative error tolerance changes to $\varepsilon_{des}=0.01$, the low compression ratio problem for ITTD5 is exacerbated where the compression ratio falls below the critical value of 1 for ITTD5. Having a compression ratio lower than 1 means that TT-cores have more entries than the original accumulation. Despite having a much lower compression ratio than the case with $\varepsilon_{des}=0.1$, all of the TT-ICE variations again perform comparably with a compression ratio around $3.5\times$.

To summarize, the investigations yielded a decrease in compression time, an increase in compression ratio, and improved execution stability when TT-ICE is preferred over ITTD. Furthermore, investigations show an additional decrease in compression time and an increase in compression ratio when TT-ICE$^*$ is preferred over TT-ICE.

\begin{figure}[h!]
	\begin{subfigure}{\textwidth}
		\centering
		\includegraphics[width=0.49\textwidth]{pacman-legend}
	\end{subfigure}
	\begin{subfigure}{0.5\textwidth}
	\centering
	\includegraphics[width=\textwidth]{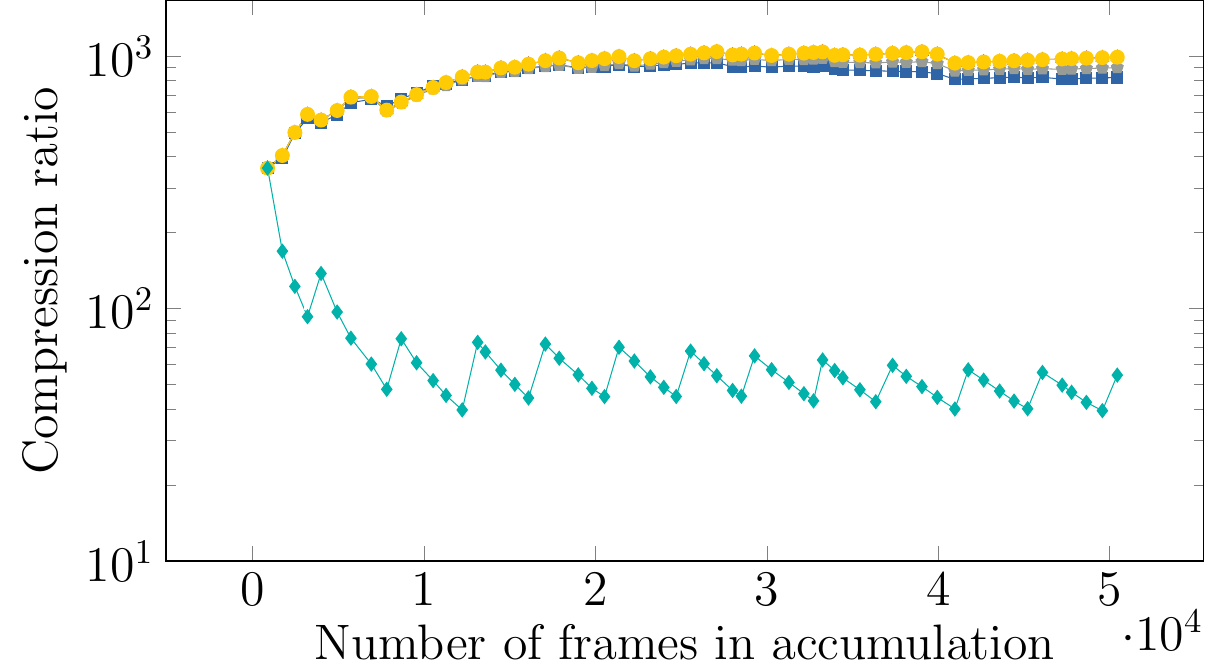}
	\caption{$\varepsilon_{des}=0.1$}
	\label{fig:mspacman01compression}
	\end{subfigure}
\hfill
	\begin{subfigure}{0.49\textwidth}
	\centering
	\centering
	\includegraphics[width=\textwidth]{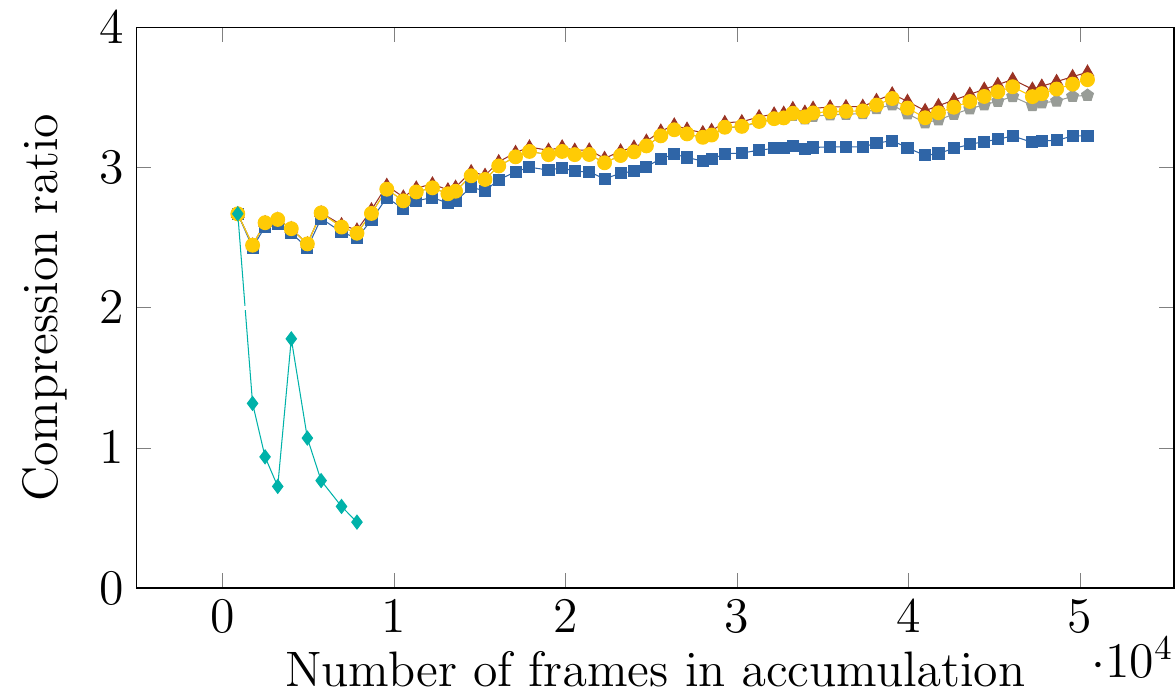}
	\caption{$\varepsilon_{des}=0.01$}
	\label{fig:mspacman001compression}
	\end{subfigure}
\mcaption{Comparison of compression ratio vs total number of observations in the tensor train for different versions of TT-ICE, TT-ICE$^*$, and ITTD\cite{Liu2018} with two different $\varepsilon$ settings. Please refer to Figure~\ref{fig:mspacmantime} for a description of the legend. ITTD has the worst compression ratio while TT-ICE$^*$ with all heuristics performs the best. ITTD Fails to compress the entire stream for $\varepsilon_{des}=0.01$.}
\label{fig:mspacmancompression}
\end{figure}

\subsubsection{Self-oscillating gel simulations}
This section includes tests of TT-ICE's performance to compress simulation outputs of high-dimensional PDEs.

When high-dimensional systems are of interest, the size of the simulation outputs can become prohibitive. In the extreme, even the storage of the data may not be feasible. Streaming compression algorithms can become essential when the outputs of a dynamical simulation become sequentially available. Moreover, tensor decompositions can extract more information than incremental matrix decomposition methods to be used to learn low-dimensional representations, for example, for inverse design~\cite{Aksoy2022}.

The second dataset arises from solutions of a parametric PDE that simulates the motion of a hexagonal sheet of self-oscillating gels.
The motion of the gel is governed by the following time-dependent parametric PDE
\begin{align}\label{eq:catgelparametricpde}
	\mu\frac{\partial r}{\partial t}=f_{s}\left(r,\mathbf{K_{s}},\eta\right)+f_{B}\left(r\right), && \eta\left(x,y,t,\mathbf{A},\mathbf{k}\right)=1+\mathbf{A}\sin\left(2\pi\left(\mathbf{k}\sqrt{x^{2}+y^{2}}-t\right)\right),
\end{align}
where the bold terms indicate the input parameters to the forward model that define the characteristics of the excitation as well as the mechanical properties of the gel. More specifically, $\mathbf{K_s}$ denotes the stretching stiffness of the sheet, $\mathbf{k}$ determines the wavenumber of the sinusoidal excitation, and $\mathbf{A}$ determines the amplitude of the wave traveling on the sheet. Other terms governing the overdamped sheet dynamics are: internal damping coefficient $\mu$, material coordinates $r=(x,y,z)$, stretching force $f_s$, bending force $f_b$, rest strain $\eta$, and time $t$.

We uniformly sample from this 3-dimensional space of input parameters to obtain 6400 unique parameter combinations and then simulate each of those parameter combinations using the approach in~\cite{Alben2019}. The simulations are chaotic, but we use 10 sequential timesteps from each simulation as our data. Specifically, we seek to compress the $x,y$, and $z$ coordinates of 3367 mesh nodes on a hexagonal gel sheet for each time snapshot as shown in Figure~\ref{fig:examplegels}.

To summarize, the data consists of $3367\times3\times10$ tensors for \textit{each} parameter combination that contain the coordinate information of the mesh. We refer to those output tensors as \textit{simulations} for brevity and treat them as individual incremental units.
For compression experiments, each simulation is reshaped into tensors of size $7\times13\times37\times3\times10\times1$ and stacked along the sixth dimension to obtain the accumulation. We conducted all the experiments on M3 machine that has a Xeon Silver 4110 processor and 16GB memory. \Cref{fig:catgeltime,fig:catgelcompression,fig:catgelerror} show the results of those experiments. The occupancy threshold of TT-ICE$^*$ is set to 1 for this dataset. This prevents update attempts when the basis for one dimension is fully explored (i.e. the TT-core has full rank). Finally, we compute the mean of RRE over compressed simulations in the accumulation. 

Despite the suitability of TT-FOA to this tensor stream, we can't provide any results for TT-FOA. Even when we assume that TT-FOA is initialized with the final ranks of TT-ICE${}^{*}$ with $\varepsilon_{des}=0.1$, the memory required by the auxiliary matrices, matrix inversions, and Kronecker products in the algorithm exceeds the memory of M3 machine and fails at the first step\footnote{When we repeat the same experiment with M1 machine, which has the same processor with M3 and has higher memory, computing one step of the stream takes more than $1500s$.}.

\paragraph{\textbf{Compression results}}

This section includes the results of experiments on self-oscillating gel simulation snapshots.

Unlike the previous set of experiments involving ATARI gameplay frames, ITTD methods fail to compress the entire stream in both $\varepsilon_{des}$ settings due to insufficient memory while attempting TT-rounding. In all scenarios, TT-ICE$^*$ outperforms TT-ICE, but both algorithms are able to compress the entire stream successfully.

\Cref{fig:catgel01time,fig:catgel001time} depict the differences in compression speed between algorithms. Figure~\ref{fig:catgel01time} shows that it takes more time for both ITTD2 and ITTD5 to compress $1/20$ of the entire stream than for TT-ICE and TT-ICE$^*$ to compress the entire stream. Figure~\ref{fig:catgel01time} also clearly illustrates the benefits of implementing heuristic upgrades, since TT-ICE$^*$ compresses the entire stream an order of magnitude faster than TT-ICE. This difference in compression time becomes less in Figure~\ref{fig:catgel001time} with $\varepsilon_{des}=0.01$ but TT-ICE$^*$ still provides a significant reduction in time in comparison to TT-ICE. At $\varepsilon_{des}=0.01$, ITTD methods fail even earlier than before and compress at most $1/50$ of the entire stream.

\begin{figure}[h!]
	\begin{subfigure}{\textwidth}
		\centering
		\includegraphics[width=0.5\textwidth]{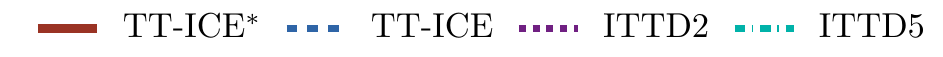}
	\end{subfigure}
\begin{subfigure}{0.49\textwidth}
	\centering
	\includegraphics[width=\textwidth]{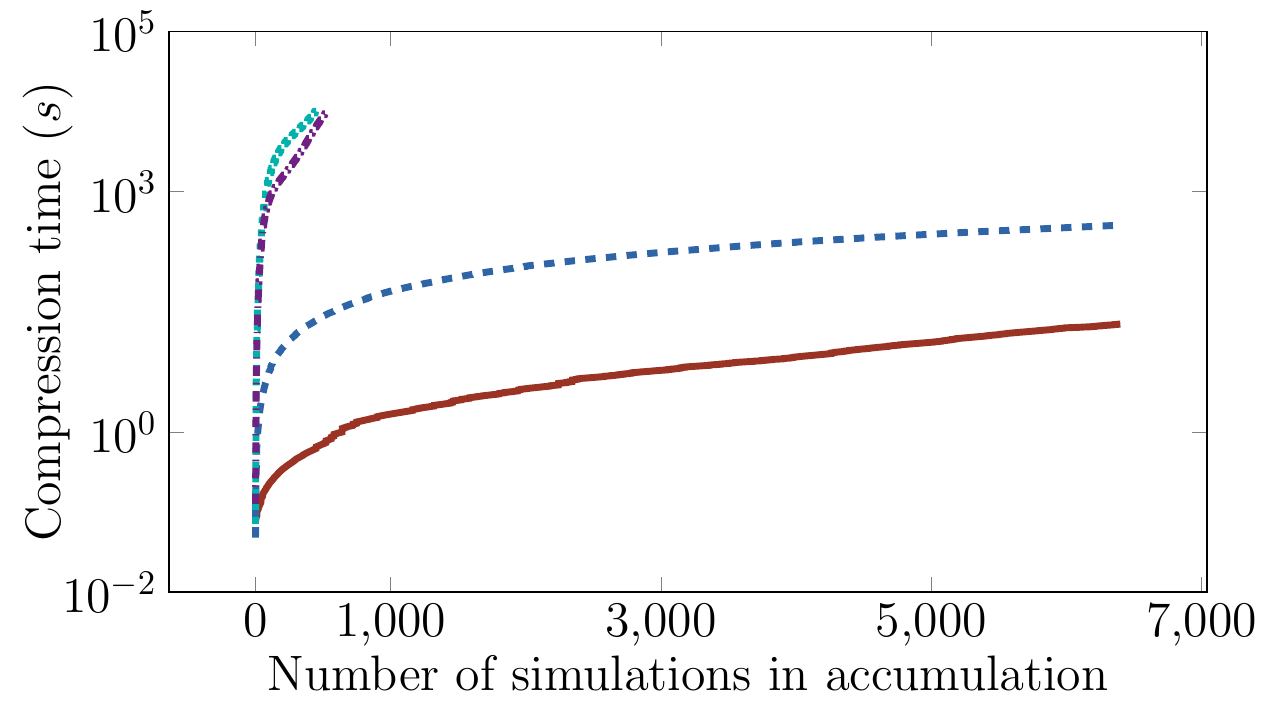}
	\caption{$\varepsilon_{des}=0.1$}
	\label{fig:catgel01time}
\end{subfigure}
\hfill
\begin{subfigure}{0.49\textwidth}
	\centering
	\includegraphics[width=\textwidth]{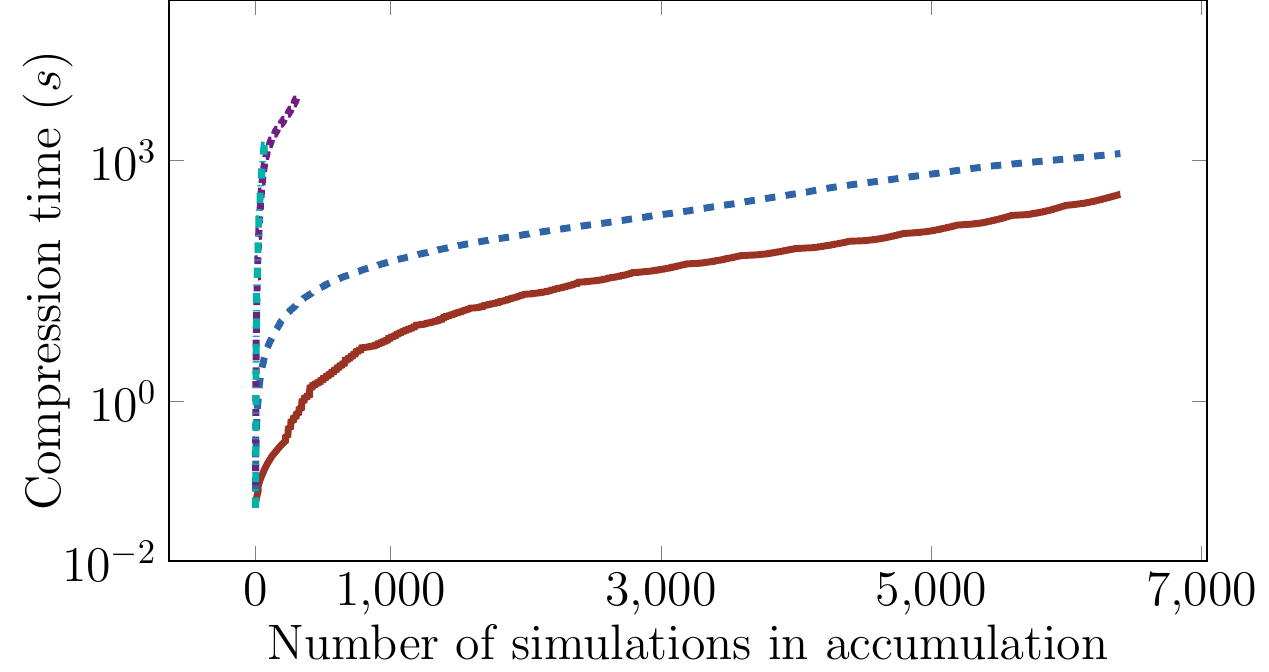}
	\caption{$\varepsilon_{des}=0.01$}
	\label{fig:catgel001time}
\end{subfigure}
\mcaption{Comparison of execution time vs total number of simulations in the tensor train for Algorithm~\ref{alg:TT-ICE}, Algorithm~\ref{alg:coreupdateheuristic} and ITTD\cite{Liu2018} with two different $\varepsilon$ settings. TT-ICE$^*$ outperforms TT-ICE by one order of magnitude. ITTD Fails to compress the entire stream for both cases. TT-ICE$^*$: full TT-ICE$^*$ with all heuristics defined in Section~\ref{sec:heuristics}, TT-ICE: TT-ICE algorithm (Algorithm~\ref{alg:TT-ICE}), ITTD5: ITTD with rounding at every fifth increment step, ITTD2: ITTD with rounding at every second increment step.}
\label{fig:catgeltime}
\end{figure}

\Cref{fig:catgel01compression,fig:catgel001compression} present the differences in the compression ratio between algorithms. Figure~\ref{fig:catgel01error} shows that both TT-ICE and TT-ICE$^*$ achieve excessive compression close to $10^{4}\times$ in the early stages of the stream and then exhibit a decay in compression ratio. This behavior is related to the streaming order of the simulations and is expected. Towards the beginning, the stream consists of simulations with similar parameter combinations. This leads to simulations exhibiting similar motion patterns and allows the accumulation to be represented with a small basis. Then, as the stream progresses, the accumulation consists of simulations from a greater variety of parameter combinations and calls for an expansion in the bases. The same pattern repeats itself in Figure~\ref{fig:catgel001compression} for $\varepsilon_{des}=0.01$, but this time the decay is greater than it was for $\varepsilon_{des}=0.1$. Since both ITTD methods fail to compress the entire stream, we can not make meaningful comments on their compression performance. However, \Cref{fig:catgel01compression,fig:catgel001compression} indicate that the peak compression achieved is consistently two orders of magnitude less than that achieved by the TT-ICE methods.

\begin{figure}[h!]
	\begin{subfigure}{\textwidth}
		\centering
		\includegraphics[width=0.5\textwidth]{catgel-legend}
	\end{subfigure}
\begin{subfigure}{0.49\textwidth}
	\centering
	\includegraphics[width=\textwidth]{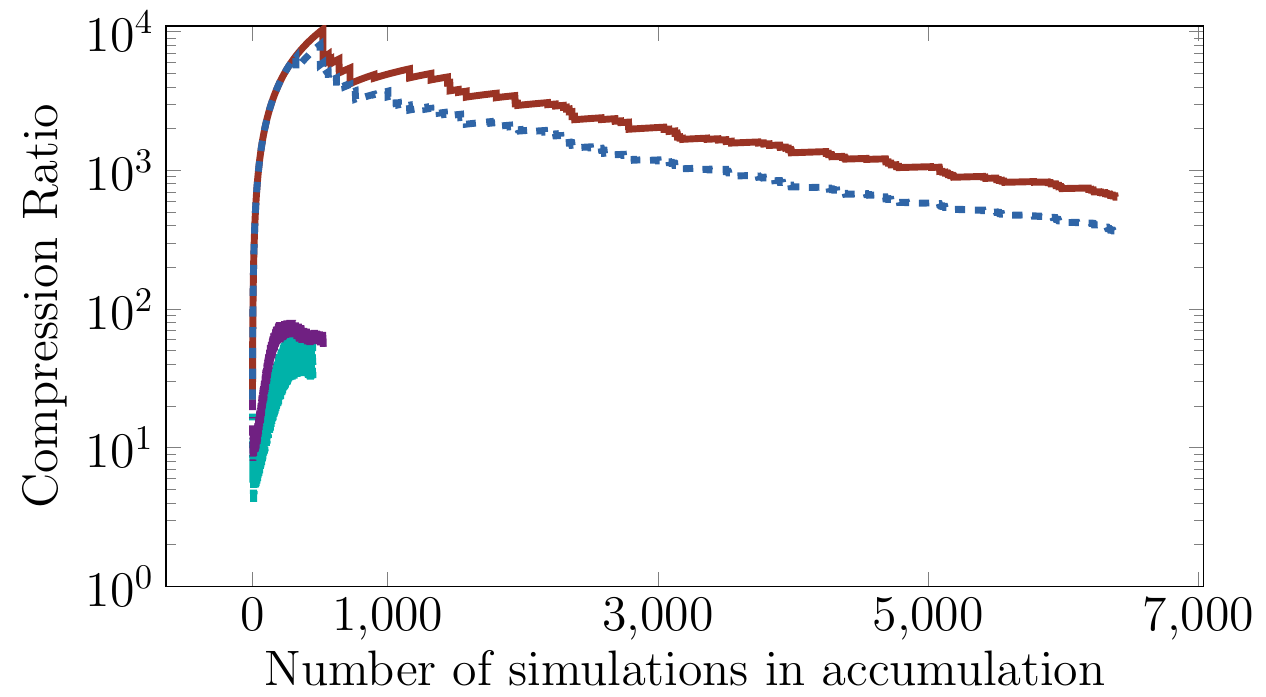}
	\caption{$\varepsilon_{des}=0.1$}
	\label{fig:catgel01compression}
\end{subfigure}
\hfill
\begin{subfigure}{0.49\textwidth}
	\centering
	\includegraphics[width=\textwidth]{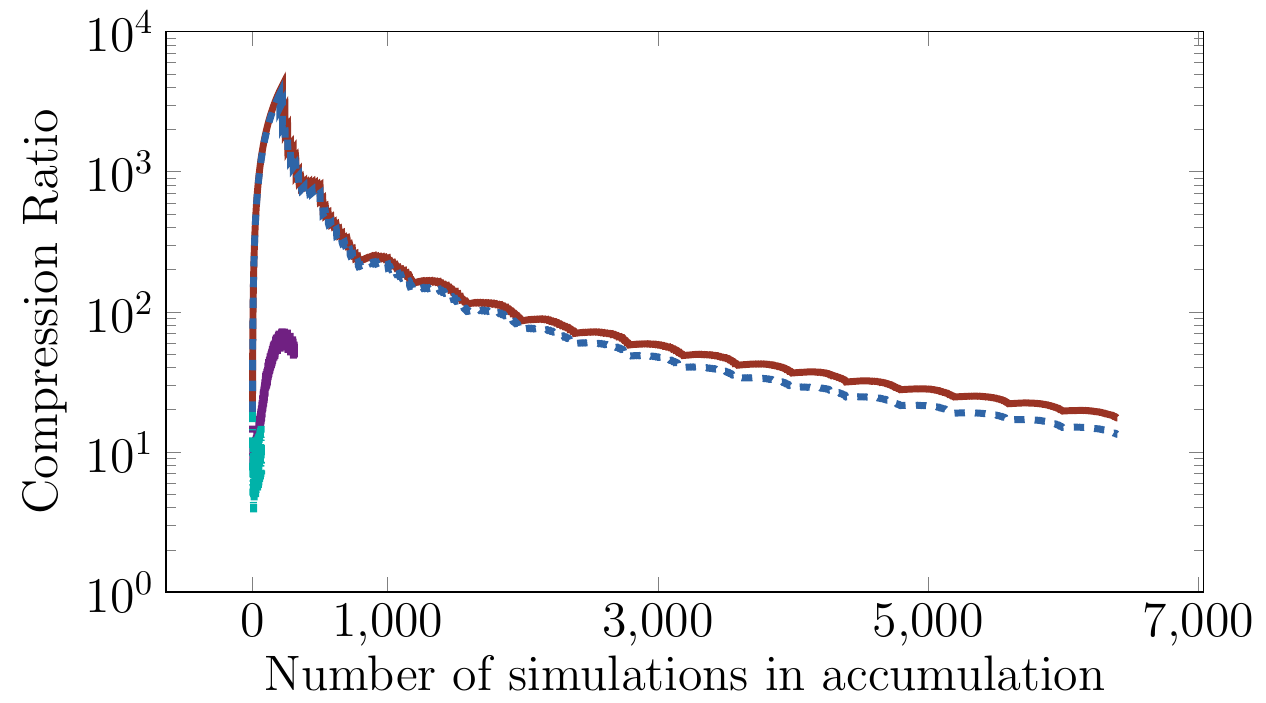}
	\caption{$\varepsilon_{des}=0.01$}
	\label{fig:catgel001compression}
\end{subfigure}
\mcaption{Comparison of compression ratio vs total number of simulations in the tensor train for Algorithm~\ref{alg:TT-ICE}, Algorithm~\ref{alg:coreupdateheuristic} and ITTD\cite{Liu2018} with two different $\varepsilon$ settings. Please refer to Figure~\ref{fig:catgeltime} for a description of the legend. TT-ICE$^*$ outperforms TT-ICE$^*$ but both methods show comparable compression performances. ITTD Fails to compress the entire stream for both cases. }
\label{fig:catgelcompression}
\end{figure}

Finally, \Cref{fig:catgel01error,fig:catgel001error} indicate differences in reconstruction error between algorithms.  Figure~\ref{fig:catgel01error} shows that TT-ICE$^*$ has a mean RRE slightly closer to $\varepsilon_{des}$ than TT-ICE. On the other hand, Figure~\ref{fig:catgel001error} shows that TT-ICE$^*$ has nearly twice the error of TT-ICE, but this difference in mean RRE diminishes to almost a constant offset as the stream progresses. In both Figure~\ref{fig:catgel01error} and Figure~\ref{fig:catgel001error}, ITTD methods start with lower mean RRE values. This can be explained by the significantly lower compression performance of these methods, where the higher coverage in the bases results in both lower compression and lower mean RRE. However, we cannot draw meaningful conclusions for ITTD since both ITTD2 and ITTD5 fail prematurely.

\begin{figure}[h!]
	\begin{subfigure}{\textwidth}
		\centering
		\includegraphics[width=0.5\textwidth]{catgel-legend}
	\end{subfigure}
\begin{subfigure}{0.48\textwidth}
	\centering
	\includegraphics[width=\textwidth]{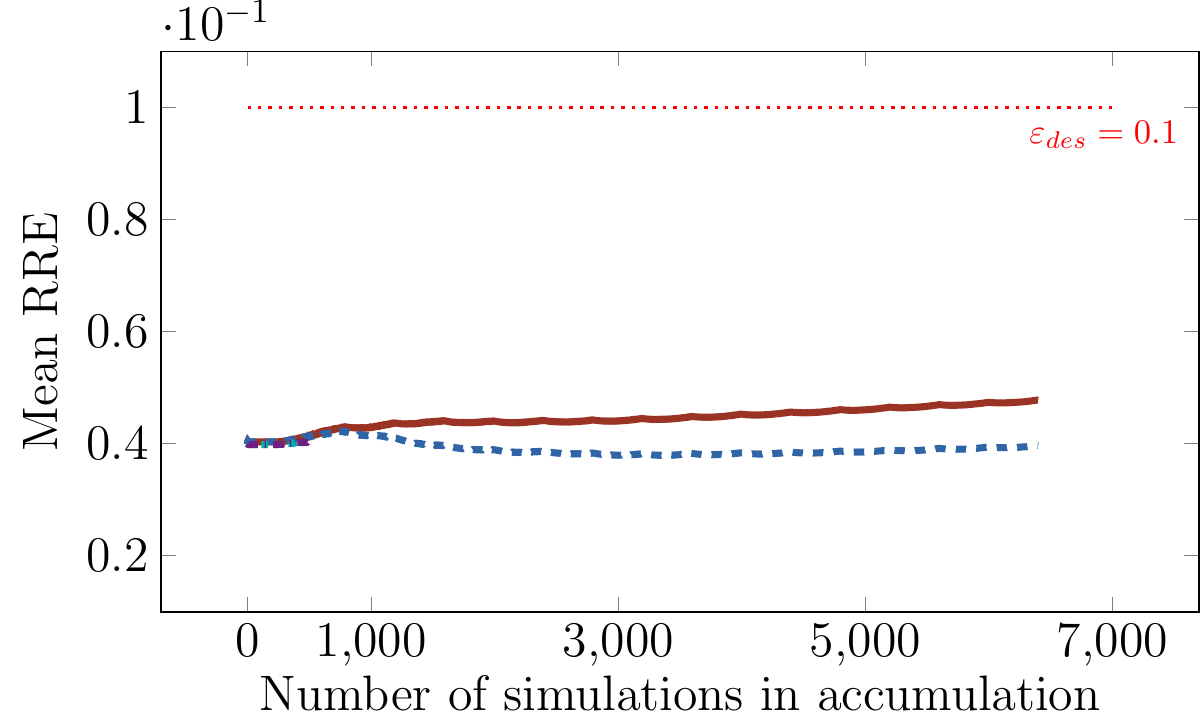}
	\caption{$\varepsilon=0.1$}
	\label{fig:catgel01error}
\end{subfigure}
\hfill
\begin{subfigure}{0.48\textwidth}
	\centering
	\includegraphics[width=\textwidth]{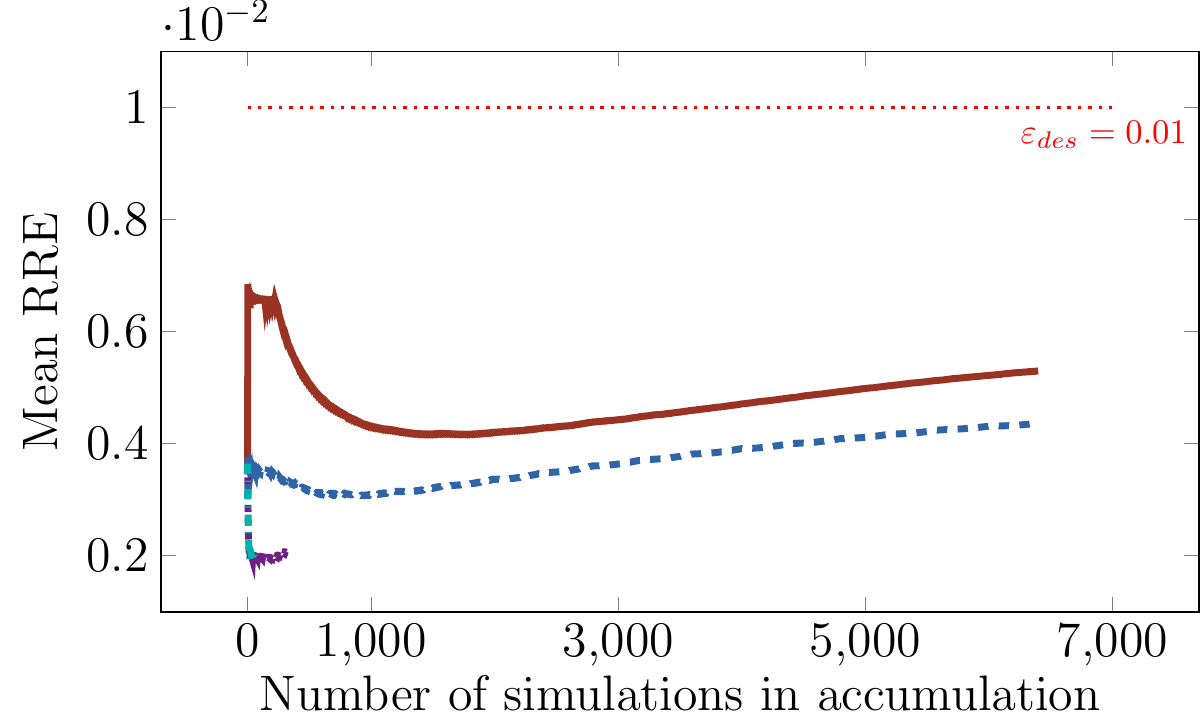}
	\caption{$\varepsilon_{des}=0.01$}
	\label{fig:catgel001error}
\end{subfigure}
\mcaption{Comparison of mean RRE vs total number of simulations in the tensor train for TT-ICE, TT-ICE$^*$ and ITTD\cite{Liu2018} with two different $\varepsilon$ settings. Please refer to Figure~\ref{fig:catgeltime} for a description of the legend. TT-ICE$^*$ provides mean RRE closer to $\varepsilon_{des}$ in both cases. ITTD Fails to compress the entire stream for both cases.}
\label{fig:catgelerror}
\end{figure}

Mean RRE tests conclude the experiments with self-oscillating gel simulations. Similar to experiments with ATARI data, TT-ICE and TT-ICE$^*$ algorithms yield reduced compression time, increased compression ratio, and improved execution stability over ITTD. 

\section{Conclusion}
\label{sec:conclusion}
In this work, we proposed a new algorithm to incrementally update a TT-decomposition to compress a stream of data. Our algorithm TT-ICE improves on the existing state-of-the-art because (1) it maintains a desired error tolerance {\it for all data increments}, (2) it updates its ranks without excessive growth, and (3) it maintains orthogonality of the TT-cores to enable efficient projection and prediction for uncompressed data. We provide proof that the TT-ICE algorithm maintains its accuracy throughout the compression process. We then provide three heuristics to improve on this algorithm and show empirical evidence that they improve performance with little sacrifice in accuracy. This enhanced version of TT-ICE is also guaranteed to maintain the accuracy of the already compressed portion of the stream. However, no such guarantee can be provided for the portion of the stream compressed with TT-ICE${}^{*}$ since heuristics are used in the first place.
Experimental results on two different types of data demonstrate the superior performance of TT-ICE over ITTD and provide empirical proof of the additional benefits obtained from the heuristic upgrades we implemented in TT-ICE$^*$. 
For simulation data, TT-ICE$^*$ achieves twice the compression ratio of TT-ICE in half the time.
For image data, TT-ICE$^*$ achieves a comparable, if not higher, compression ratio with up to 80\% reduction in the TT-ICE time. Moreover, at resource-limited hardware, TT-ICE and TT-ICE$^*$ have proven themselves to be reliable and performant compression methods.

Extensions of this work will attempt to theoretically justify the heuristics used in TT-ICE$^*$. It will also deploy the proposed methods to enable scalable machine learning in the latent space identified by the compression, e.g., for inverse design~\cite{Aksoy2022} and behavioral cloning~\cite{chen2021behavioral}.

\section*{Acknowledgments}
DA, AG, and SV acknowledge partial support from the Automotive Research Center at the University of Michigan (UM) in accordance with Cooperative Agreement W56HZV-19-2-0001 with U.S. Army DEVCOM Ground Vehicle Systems Center. 
DA and AG also acknowledge partial support from the Department of Energy Office of Scientific Research, ASCR, under grant DE-SC0020364. We thank Brian Chen for preparing the ATARI game dataset. The code for TT-ICE and TT-ICE$^*$ is publicly available on github.com/dorukaks/TT-ICE.

A note by DA: {\em This paper is dedicated to the loving memory of my grandmother Ayla Ya\c{s}ar (1942-2022). I will miss making you Turkish coffee.}

\bibliographystyle{siamplain}  
\bibliography{REReferences,AuxReferences}

\begin{thebibliography}{10}

\bibitem{Aksoy2022}
{\sc D.~Aksoy, S.~Alben, R.~D. Deegan, and A.~A. Gorodetsky}, {\em {Inverse design of self-oscillatory gels through deep learning}}, Neural Computing and Applications, 34 (2022), pp.~6879--6905, \url{https://doi.org/10.1007/s00521-021-06788-9}, \url{https://doi.org/10.1007/s00521-021-06788-9}.

\bibitem{Alben2019}
{\sc S.~Alben, A.~A. Gorodetsky, D.~Kim, and R.~D. Deegan}, {\em {Semi-implicit methods for the dynamics of elastic sheets}}, Journal of Computational Physics, 399 (2019), p.~108952, \url{https://doi.org/10.1016/j.jcp.2019.108952}, \url{https://doi.org/10.1016/j.jcp.2019.108952}, \url{https://arxiv.org/abs/1904.09198}.

\bibitem{Anaissi2020}
{\sc A.~Anaissi, B.~Suleiman, and S.~M. Zandavi}, {\em {NeCPD: An Online Tensor Decomposition with Optimal Stochastic Gradient Descent}},  (2020), \url{http://arxiv.org/abs/2003.08844}, \url{https://arxiv.org/abs/2003.08844}.

\bibitem{Baker2012}
{\sc C.~G. Baker, K.~A. Gallivan, and P.~{Van Dooren}}, {\em {Low-rank incremental methods for computing dominant singular subspaces}}, Linear Algebra and Its Applications, 436 (2012), pp.~2866--2888, \url{https://doi.org/10.1016/j.laa.2011.07.018}, \url{http://dx.doi.org/10.1016/j.laa.2011.07.018}.

\bibitem{chen2021behavioral}
{\sc B.~Chen, S.~Tandon, D.~Gorsich, A.~Gorodetsky, and S.~Veerapaneni}, {\em Behavioral cloning in atari games using a combined variational autoencoder and predictor model}, in 2021 IEEE Congress on Evolutionary Computation (CEC), IEEE, 2021, pp.~2077--2084.

\bibitem{cook2019anomaly}
{\sc A.~A. Cook, G.~M{\i}s{\i}rl{\i}, and Z.~Fan}, {\em Anomaly detection for iot time-series data: A survey}, IEEE Internet of Things Journal, 7 (2019), pp.~6481--6494.

\bibitem{de2022tensor}
{\sc S.~De, E.~Corona, P.~Jayakumar, and S.~Veerapaneni}, {\em Tensor-train compression of discrete element method simulation data}, arXiv preprint arXiv:2210.08399,  (2022).

\bibitem{Du2018}
{\sc Y.~Du, Y.~Zheng, K.~C. Lee, and S.~Zhe}, {\em {Probabilistic Streaming Tensor Decomposition}}, Proceedings - IEEE International Conference on Data Mining, ICDM, 2018-November (2018), pp.~99--108, \url{https://doi.org/10.1109/ICDM.2018.00025}.

\bibitem{Eckart1936}
{\sc C.~Eckart and G.~Young}, {\em {The approximation of one matrix by another of lower rank}}, Psychometrika, 1 (1936), pp.~211--218, \url{https://doi.org/10.1007/BF02288367}, \url{http://link.springer.com/10.1007/BF02288367}.

\bibitem{Yu2015}
{\sc M.~H. Engeli}, {\em {Bits and Spaces}}, Architecture and and Computing for Physical, Virtual and Hybrid Realms, 37 (2001), p.~207, \url{https://doi.org/10.5555/ICML.3045145}, \url{http://bitsandspaces.ethz.ch/acknowledgments}.

\bibitem{Giudici2019}
{\sc P.~Giudici, B.~Huang, and A.~Spelta}, {\em {Trade networks and economic fluctuations in Asian countries}}, Economic Systems, 43 (2019), p.~100695, \url{https://doi.org/10.1016/j.ecosys.2019.100695}, \url{https://doi.org/10.1016/j.ecosys.2019.100695}.

\bibitem{Hastad1989}
{\sc J.~H{\aa}stad}, {\em {Tensor rank is NP-complete}}, Lecture Notes in Computer Science (including subseries Lecture Notes in Artificial Intelligence and Lecture Notes in Bioinformatics), 372 LNCS (1989), pp.~451--460, \url{https://doi.org/10.1007/BFb0035776}.

\bibitem{stable-baselines}
{\sc A.~Hill, A.~Raffin, M.~Ernestus, A.~Gleave, A.~Kanervisto, R.~Traore, P.~Dhariwal, C.~Hesse, O.~Klimov, A.~Nichol, M.~Plappert, A.~Radford, J.~Schulman, S.~Sidor, and Y.~Wu}, {\em Stable baselines}.
\newblock \url{https://github.com/hill-a/stable-baselines}, 2018.

\bibitem{Jia2014}
{\sc C.~Jia, Y.~Kong, Z.~Ding, and Y.~Fu}, {\em {Latent tensor transfer learning for RGB-D action recognition}}, in MM 2014 - Proceedings of the 2014 ACM Conference on Multimedia, New York, NY, USA, nov 2014, ACM, pp.~87--96, \url{https://doi.org/10.1145/2647868.2654928}, \url{http://link.springer.com/10.1007/978-1-4419-7142-5_3 https://dl.acm.org/doi/10.1145/2647868.2654928}.

\bibitem{Liu2018}
{\sc H.~Liu, L.~T. Yang, Y.~Guo, X.~Xie, and J.~Ma}, {\em {An Incremental Tensor-Train Decomposition for Cyber-Physical-Social Big Data}}, IEEE Transactions on Big Data, 7 (2018), pp.~341--354, \url{https://doi.org/10.1109/tbdata.2018.2867485}.

\bibitem{Mahoney2011}
{\sc M.~W. Mahoney}, {\em {Randomized algorithms for matrices and data}}, Foundations and Trends in Machine Learning, 3 (2010), pp.~123--224, \url{https://doi.org/10.1561/2200000035}, \url{http://arxiv.org/abs/1104.5557 http://www.nowpublishers.com/article/Details/MAL-035}, \url{https://arxiv.org/abs/1104.5557}.

\bibitem{Martinez-Montes2004}
{\sc E.~Mart{\'{i}}nez-Montes, P.~A. Vald{\'{e}}s-Sosa, F.~Miwakeichi, R.~I. Goldman, and M.~S. Cohen}, {\em {Concurrent EEG/fMRI analysis by multiway Partial Least Squares}}, NeuroImage, 22 (2004), pp.~1023--1034, \url{https://doi.org/10.1016/j.neuroimage.2004.03.038}.

\bibitem{MIRSKY1960}
{\sc L.~Mirsky}, {\em {Symmetric gauge functions and unitarily invariant norms}}, Quarterly Journal of Mathematics, 11 (1960), pp.~50--59, \url{https://doi.org/10.1093/qmath/11.1.50}, \url{https://academic.oup.com/qjmath/article-lookup/doi/10.1093/qmath/11.1.50}.

\bibitem{Miwakeichi2004}
{\sc F.~Miwakeichi, E.~Mart{\'{i}}nez-Montes, P.~A. Vald{\'{e}}s-Sosa, N.~Nishiyama, H.~Mizuhara, and Y.~Yamaguchi}, {\em {Decomposing EEG data into space-time-frequency components using Parallel Factor Analysis}}, NeuroImage, 22 (2004), pp.~1035--1045, \url{https://doi.org/10.1016/j.neuroimage.2004.03.039}.

\bibitem{Mnih2015}
{\sc V.~Mnih, K.~Kavukcuoglu, D.~Silver, A.~A. Rusu, J.~Veness, M.~G. Bellemare, A.~Graves, M.~Riedmiller, A.~K. Fidjeland, G.~Ostrovski, S.~Petersen, C.~Beattie, A.~Sadik, I.~Antonoglou, H.~King, D.~Kumaran, D.~Wierstra, S.~Legg, and D.~Hassabis}, {\em {Human-level control through deep reinforcement learning}}, Nature, 518 (2015), pp.~529--533, \url{https://doi.org/10.1038/nature14236}, \url{http://dx.doi.org/10.1038/nature14236}.

\bibitem{Nakatsuji2017}
{\sc M.~Nakatsuji, Q.~Zhang, X.~Lu, B.~Makni, and J.~A. Hendler}, {\em {Semantic Social Network Analysis by Cross-Domain Tensor Factorization}}, IEEE Transactions on Computational Social Systems, 4 (2017), pp.~207--217, \url{https://doi.org/10.1109/TCSS.2017.2732685}.

\bibitem{Oseledets2011}
{\sc I.~V. Oseledets}, {\em {Tensor-train decomposition}}, SIAM Journal on Scientific Computing, 33 (2011), pp.~2295--2317, \url{https://doi.org/10.1137/090752286}, \url{http://epubs.siam.org/doi/10.1137/090752286}.

\bibitem{rl-zoo}
{\sc A.~Raffin}, {\em Rl baselines zoo}.
\newblock \url{https://github.com/araffin/rl-baselines-zoo}, 2018.

\bibitem{Sizov2010}
{\sc S.~Sizov, S.~Staab, and T.~Franz}, {\em {Analysis of Social Networks by Tensor Decomposition}}, in Handbook of Social Network Technologies and Applications, B.~Furht, ed., Springer US, Boston, MA, 2010, pp.~45--58, \url{https://doi.org/10.1007/978-1-4419-7142-5_3}, \url{http://link.springer.com/10.1007/978-1-4419-7142-5 http://link.springer.com/10.1007/978-1-4419-7142-5_3}.

\bibitem{Smith2018}
{\sc S.~Smith, K.~Huang, N.~D. Sidiropoulos, and G.~Karypis}, {\em {Streaming tensor factorization for infinite data sources}}, in SIAM International Conference on Data Mining, SDM 2018, Philadelphia, PA, may 2018, Society for Industrial and Applied Mathematics, pp.~81--89, \url{https://doi.org/10.1137/1.9781611975321.10}, \url{https://epubs.siam.org/doi/10.1137/1.9781611975321.10}.

\bibitem{Sobral2014}
{\sc A.~Sobral, C.~G. Baker, T.~Bouwmans, and E.~H. Zahzah}, {\em {Incremental and multi-feature tensor subspace learning applied for background modeling and subtraction}}, in Lecture Notes in Computer Science (including subseries Lecture Notes in Artificial Intelligence and Lecture Notes in Bioinformatics), vol.~8814, 2014, pp.~94--103, \url{https://doi.org/10.1007/978-3-319-11758-4_11}, \url{http://link.springer.com/10.1007/978-3-319-11758-4_11}.

\bibitem{Thanh2021}
{\sc L.~T. Thanh, K.~Abed-Meraim, N.~L. Trung, and R.~Boyer}, {\em {Adaptive algorithms for tracking tensor-train decomposition of streaming tensors}}, European Signal Processing Conference, 2021-January (2021), pp.~995--999, \url{https://doi.org/10.23919/Eusipco47968.2020.9287780}.

\bibitem{Vandecappelle2017}
{\sc M.~Vandecappelle, N.~Vervliet, and L.~{De Lathauwer}}, {\em {Nonlinear least squares updating of the canonical polyadic decomposition}}, 25th European Signal Processing Conference, EUSIPCO 2017, 2017-January (2017), pp.~663--667, \url{https://doi.org/10.23919/EUSIPCO.2017.8081290}.

\bibitem{Wang2021}
{\sc X.~Wang, L.~T. Yang, Y.~Wang, L.~Ren, and M.~J. Deen}, {\em {ADTT: A Highly Efficient Distributed Tensor-Train Decomposition Method for IIoT Big Data}}, IEEE Transactions on Industrial Informatics, 17 (2021), pp.~1573--1582, \url{https://doi.org/10.1109/TII.2020.2967768}.

\end{thebibliography}

\appendix
\begin{appendices}
\renewcommand\thetable{\thesection.\arabic{table}}
\renewcommand\thefigure{\thesection.\arabic{figure}}
\section{Other Atari Games}
This section includes the results of the experiments with the other ATARI games and their discussion.

TT-ICE and TT-ICE$^*$ proved their performance in the first experiments with Ms.Pac-Man frames. Next, we investigated the performance of algorithms across different hardware combinations.

For these experiments, we focus our attention on TT-ICE, TT-ICE$^*$, and ITTD and to understand performance across different hardware combinations, we tested the algorithms on several machines. For each experiment, we detail the machine on which it was conducted. The machines are \textit{Dell Precision 7820} workstation with Xeon Silver 4110 processor and 32 GB memory, \textit{Dell XPS-15 9570} laptop with i7-8750H processor and 16 GB memory, and \textit{Dell Precision 7820} workstation with Xeon Silver 4110 processor and 16 GB memory. The machines are referred to in Table~\ref{tab:atariresults} as M1, M2, and M3, respectively.
Table~\ref{tab:atariresults} indicates improvements between $
44.5\times$ to $56.7\times$ in compression and between $8.2\times$ to $23.5\times$ in time over ITTD5.

In Table~\ref{tab:atariresults}, TT-ICE and TT-ICE$^*$ complete the compression of streams faster than ITTD5. Furthermore, our proposed algorithms achieve at least an order of magnitude higher compression ratio than ITTD5. In Table~\ref{tab:atariresults}, ITTD5 fails without an exception to compress the entire stream due to insufficient memory when $\varepsilon_{des}=0.01$. This is caused by the high computational requirements of the rounding step in ITTD5 and becomes prohibitive even with $\varepsilon_{des}=0.1$ for games Beamrider and Enduro. The only case where all three methods fail to compress the entire stream is Enduro with $\varepsilon_{des}=0.01$ due to the limited memory of the machine. However, even in that case, we see that TT-ICE can compress at least 3 times more frames than ITTD5.

An impressive point from Table~\ref{tab:atariresults} is the difference in number of frames used between TT-ICE and TT-ICE$^*$. By subselecting frames and skipping the update of the first $d$ TT-cores, TT-ICE$^*$ reduces the number of frames used in updates by up to $97\%$ (Pong with $\varepsilon_{des}=0.1$) without any loss in mean RRE. The reduction in time is also parallel with the reduction in the frames used. TT-ICE$^*$ achieves at least $48\%$ reduction (Seaquest with $\varepsilon_{des}=0.01$) in execution time over TT-ICE.

In Table~\ref{tab:atariresults}, Beamrider with $\varepsilon=0.01$ is another insightful example, which shows that implementing heuristics can be influential on the success of TT-ICE. In that case, TT-ICE$^*$ successfully completes the decomposition of the stream, whereas TT-ICE fails to complete the same task.

The additional experiments using ATARI data show that TT-ICE and TT-ICE$^*$ can successfully compress streams across different hardware combinations. Furthermore, they provide proof that the decrease in compression time over ITTD is not hardware dependent.

	To provide a qualitative perspective on what each of the $\varepsilon_{des}$ levels corresponds, we provide reconstructions of the frames compressed both with $\varepsilon_{des}=0.1$ (\Cref{fig:eps01reconstruction}) and $\varepsilon_{des}=0.01$ (\Cref{fig:eps001reconstruction}). In \Cref{fig:eps01reconstruction} we include ITTD5 reconstructions in addition to TT-ICE reconstructions. \Cref{fig:eps01reconstruction} shows that there are visual distortions on each reconstructed frame regardless of the algorithm used. These artifacts caused at the error tolerance of $\varepsilon_{des}=0.1$ can be as unobtrusive as discolored parts/objects (Breakout, Ms.Pac-Man) but also can be completely disruptive as invisible agents (Pong, Q*bert).
		
	\begin{figure}[h!]
		%	\centering
		\begin{subfigure}{0.12\textwidth}
			\includegraphics[width=\columnwidth]{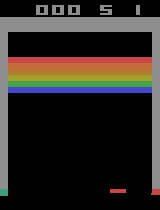}
		\end{subfigure}
		\begin{subfigure}{0.12\textwidth}
			\includegraphics[width=\columnwidth]{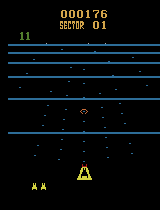}
		\end{subfigure}
		\begin{subfigure}{0.12\textwidth}
			\includegraphics[width=\columnwidth]{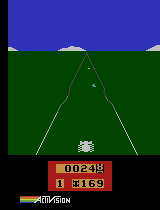}
		\end{subfigure}
		\begin{subfigure}{0.12\textwidth}
			\includegraphics[width=\columnwidth]{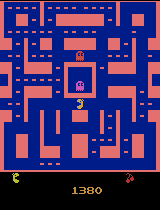}
		\end{subfigure}
		\begin{subfigure}{0.12\textwidth}
			\includegraphics[width=\columnwidth]{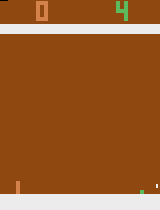}
		\end{subfigure}
		\begin{subfigure}{0.12\textwidth}
			\includegraphics[width=\columnwidth]{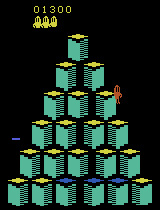}
		\end{subfigure}
		\begin{subfigure}{0.12\textwidth}
			\includegraphics[width=\columnwidth]{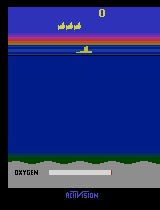}
		\end{subfigure}
		\begin{subfigure}{0.12\textwidth}
			\includegraphics[width=\columnwidth]{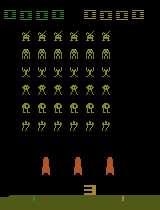}
		\end{subfigure}
	
		\begin{subfigure}{0.12\textwidth}
			\includegraphics[width=\columnwidth]{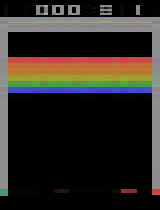}
			%		\subcaption{Breakout}
		\end{subfigure}
		\begin{subfigure}{0.12\textwidth}
			\includegraphics[width=\columnwidth]{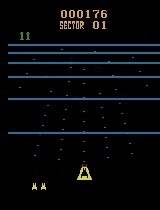}
			%		\subcaption{Beam Rider}
		\end{subfigure}
		\begin{subfigure}{0.12\textwidth}
			\includegraphics[width=\columnwidth]{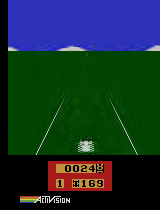}
			%		\subcaption{Enduro}
		\end{subfigure}
		\begin{subfigure}{0.12\textwidth}
			\includegraphics[width=\columnwidth]{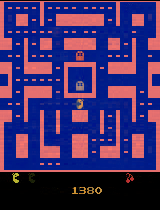}
			%		\subcaption{Ms.PacMan}
		\end{subfigure}
		\begin{subfigure}{0.12\textwidth}
			\includegraphics[width=\columnwidth]{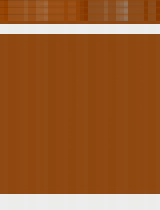}
			%		\subcaption{Pong}
		\end{subfigure}
		\begin{subfigure}{0.12\textwidth}
			\includegraphics[width=\columnwidth]{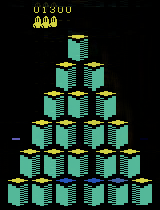}
			%		\subcaption{Qbert}
		\end{subfigure}
		\begin{subfigure}{0.12\textwidth}
			\includegraphics[width=\columnwidth]{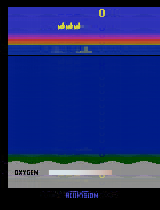}
			%		\subcaption{Seaquest}
		\end{subfigure}
		\begin{subfigure}{0.12\textwidth}
			\includegraphics[width=\columnwidth]{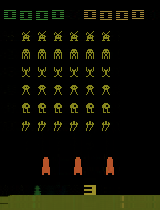}
			%		\subcaption{Space Invaders}
		\end{subfigure}
	
		\begin{subfigure}{0.12\textwidth}
		\includegraphics[width=\columnwidth]{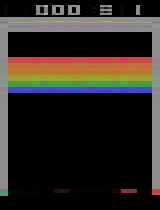}
		%		\subcaption{Breakout}
		\end{subfigure}
		\begin{subfigure}{0.12\textwidth}
		\includegraphics[width=\columnwidth]{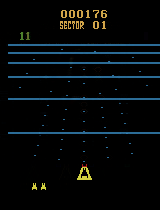}
		%		\subcaption{Beam Rider}
		\end{subfigure}
		\begin{subfigure}{0.12\textwidth}
		\includegraphics[width=\columnwidth]{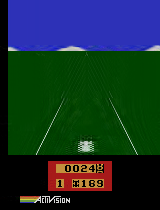}
		%		\subcaption{Enduro}
		\end{subfigure}
		\begin{subfigure}{0.12\textwidth}
		\includegraphics[width=\columnwidth]{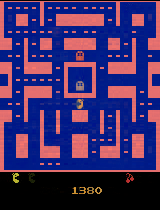}
		%		\subcaption{Ms.PacMan}
		\end{subfigure}
		\begin{subfigure}{0.12\textwidth}
		\includegraphics[width=\columnwidth]{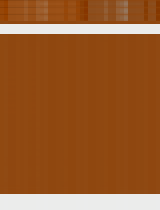}
		%		\subcaption{Pong}
		\end{subfigure}
		\begin{subfigure}{0.12\textwidth}
		\includegraphics[width=\columnwidth]{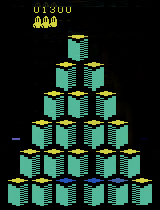}
		%		\subcaption{Qbert}
		\end{subfigure}
		\begin{subfigure}{0.12\textwidth}
		\includegraphics[width=\columnwidth]{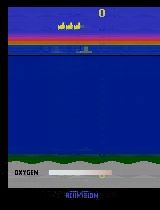}
		%		\subcaption{Seaquest}
		\end{subfigure}
		\begin{subfigure}{0.12\textwidth}
		\includegraphics[width=\columnwidth]{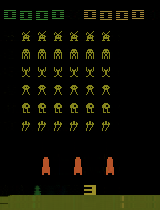}
		%		\subcaption{Space Invaders}
		\end{subfigure}
		\caption{ Original frames from the Atari dataset (top) along with their respective reconstructions from the TT-cores compressed using TT-ICE (middle) and ITTD5 (bottom) with $\varepsilon_{des}=0.1$. There are visible visual artifacts on some of the frames. There is no visible difference between TT-ICE and ITTD5 reconstructions. Left-to-right: Breakout, Beam Rider, Enduro, Ms.Pac-Man, Pong, Q*bert, Seaquest, Space Invaders
		}
		\label{fig:eps01reconstruction}
	\end{figure}

	As the error tolerance is reduced to $\varepsilon_{des}=0.01$, the visual artifacts in the reconstruction disappear. In \Cref{fig:eps001reconstruction}, we see that for $\varepsilon_{des}=0.01$ the frames can be reconstructed with no visible disruptions. This also provides evidence that TT-cores trained with TT-ICE and TT-ICE${}^{*}$ can be used as a method of storage. Unfortunately at this error tolerance level, we can not provide reconstructions of ITTD, as ITTD5 fails due to insufficient memory for all games.
	
	\begin{figure}[h!]
		%	\centering
		\begin{subfigure}{0.12\textwidth}
			\includegraphics[width=\columnwidth]{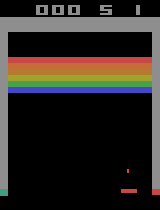}
		\end{subfigure}
		\begin{subfigure}{0.12\textwidth}
			\includegraphics[width=\columnwidth]{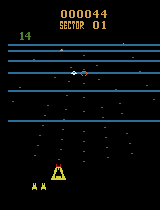}
		\end{subfigure}
		\begin{subfigure}{0.12\textwidth}
			\includegraphics[width=\columnwidth]{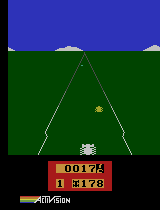}
		\end{subfigure}
		\begin{subfigure}{0.12\textwidth}
			\includegraphics[width=\columnwidth]{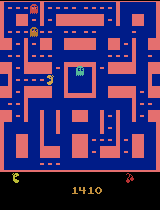}
		\end{subfigure}
		\begin{subfigure}{0.12\textwidth}
			\includegraphics[width=\columnwidth]{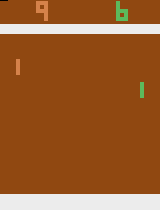}
		\end{subfigure}
		\begin{subfigure}{0.12\textwidth}
			\includegraphics[width=\columnwidth]{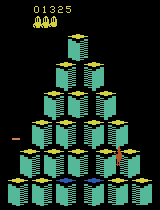}
		\end{subfigure}
		\begin{subfigure}{0.12\textwidth}
			\includegraphics[width=\columnwidth]{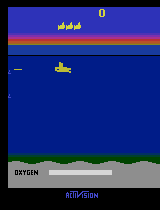}
		\end{subfigure}
		\begin{subfigure}{0.12\textwidth}
			\includegraphics[width=\columnwidth]{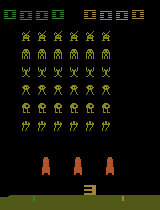}
		\end{subfigure}
		\begin{subfigure}{0.12\textwidth}
			\includegraphics[width=\columnwidth]{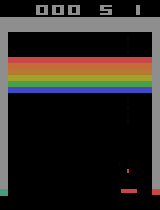}
			%		\subcaption{Breakout}
		\end{subfigure}
		\begin{subfigure}{0.12\textwidth}
			\includegraphics[width=\columnwidth]{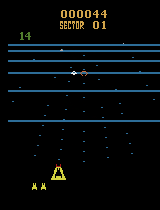}
			%		\subcaption{Beam Rider}
		\end{subfigure}
		\begin{subfigure}{0.12\textwidth}
			\includegraphics[width=\columnwidth]{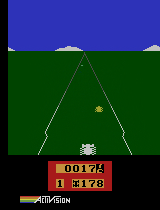}
			%		\subcaption{Enduro}
		\end{subfigure}
		\begin{subfigure}{0.12\textwidth}
			\includegraphics[width=\columnwidth]{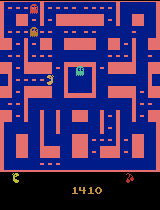}
			%		\subcaption{Ms.PacMan}
		\end{subfigure}
		\begin{subfigure}{0.12\textwidth}
			\includegraphics[width=\columnwidth]{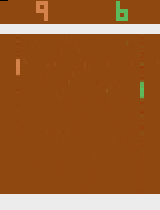}
			%		\subcaption{Pong}
		\end{subfigure}
		\begin{subfigure}{0.12\textwidth}
			\includegraphics[width=\columnwidth]{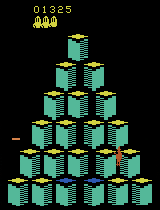}
			%		\subcaption{Qbert}
		\end{subfigure}
		\begin{subfigure}{0.12\textwidth}
			\includegraphics[width=\columnwidth]{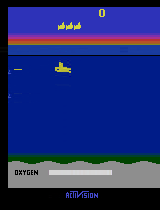}
			%		\subcaption{Seaquest}
		\end{subfigure}
		\begin{subfigure}{0.12\textwidth}
			\includegraphics[width=\columnwidth]{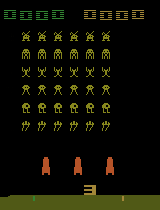}
			%		\subcaption{Space Invaders}
		\end{subfigure}
		\caption{Original frames from the Atari dataset (top) along with their respective reconstructions (bottom) from the TT-cores compressed using TT-ICE with $\varepsilon_{des}=0.01$. There are no visible artifacts on the frames. ITTD5 reconstructions are not included here since ITTD5 fails at all games for $\varepsilon_{des}=0.01$. Left-to-right: Breakout, Beam Rider, Enduro, Ms.Pac-Man, Pong, Q*bert, Seaquest, Space Invaders 
		}
		\label{fig:eps001reconstruction}
	\end{figure}

\begin{table}[h!]
	\mcaption{Summary statistics for the other ATARI games. Game: name of the game, Machine: codename of the computer that conducted the experiments, Method: decomposition algorithm used (TT-ICE$^*$: Algorithm~\ref{alg:coreupdateheuristic}, TT-ICE: Algorithm~\ref{alg:TT-ICE}, ITTD: \cite[Alg 3]{Liu2018}), $\varepsilon$: the desired relative error bound for the given decomposition method, Frames: total number of the frames compressed in the TT-cores, Used: number of frames used to update the TT-cores, Time: total execution time of the algorithm, mean RRE: final mean RRE at the end of the experiment, CR: compression ratio. For each experiment, the best performance is shown in bold for Time, mean RRE, and CR. All failures are due to insufficient memory.} 
	\centering
	\begin{tabular}{l|l|c|c|c|c|c|c|c}
		Game & Machine & Method & $\varepsilon$ & Frames & Used & Time & mean RRE & CR\\
		\hline
		\multirow{3}{*}{Breakout}
		 	&\multirow{3}{*}{M2}&TT-ICE& 0.1 &9295&9295&212.2&0.078&2748.1\\
		  	&&TT-ICE$^*$& 0.1 &9295&2376&\textbf{58.9}&\textbf{0.084}&\textbf{3525.9}\\
		   	&&ITTD 5& 0.1 &9295&9295&801.2&0.083&66.55\\
		   	\hline
	   	\multirow{3}{*}{Breakout}
		   	&\multirow{3}{*}{M2}&TT-ICE& 0.01 &9295&9295&572.7&0.0075&9.7\\
		   	&&TT-ICE$^*$& 0.01 &9295&4997&\textbf{198.5}&\textbf{0.0078}&\textbf{10.1}\\
		   	&&ITTD 5& 0.01 &436&436&Fails&Fails&Fails\\
		\hline
		\hline
		\multirow{3}{*}{Beamrider}
			&\multirow{3}{*}{M2}&TT-ICE& 0.1 &42473&42473&2689.1&\textbf{0.089}&\textbf{130.7}\\
			&&TT-ICE$^*$& 0.1 &42473&4017&\textbf{193.0}&0.087&122.3\\
			&&ITTD 5& 0.1 &22606&22606&Fails&Fails&Fails\\
			\hline
		\multirow{3}{*}{Beamrider}
			&\multirow{3}{*}{M2}&TT-ICE& 0.01 &26257&26257&Fails&Fails&Fails\\
			&&TT-ICE$^*$& 0.01 &42473&22693&\textbf{2334.3}&\textbf{0.008}&\textbf{4.3}\\
			&&ITTD 5& 0.01 &4937&4937&Fails&Fails&Fails\\
		\hline
		\hline
		\multirow{3}{*}{Enduro}
			&\multirow{3}{*}{M1}&TT-ICE& 0.1 &132880&132880&11158.8&\textbf{0.096}&\textbf{246.6}\\
			&&TT-ICE$^*$& 0.1 &132880&10043&\textbf{940.1}&0.095&225.6\\
			&&ITTD 5& 0.1 &96329&96329&Fails&Fails&Fails\\
			\hline
		\multirow{3}{*}{Enduro}
			&\multirow{3}{*}{M1}&TT-ICE& 0.01 &49819&49819&Fails&Fails&Fails\\
			&&TT-ICE$^*$& 0.01 &76396&40482&Fails&Fails&Fails\\
			&&ITTD 5& 0.01 &13289&13289&Fails&Fails&Fails\\
		\hline
		\hline
		\multirow{3}{*}{Q*bert}
			&\multirow{3}{*}{M3}&TT-ICE& 0.1 &18742&18742&1063.6&0.089&817.7\\
			&&TT-ICE$^*$& 0.1 &18742&3993&\textbf{180.7}&\textbf{0.094}&\textbf{968.4}\\
			&&ITTD 5& 0.1 &18742&18742&1477.5&\textbf{0.094}&18.0\\
		\hline
		\multirow{3}{*}{Q*bert}
			&\multirow{3}{*}{M3}&TT-ICE& 0.01 &18742&18742&1740.4&0.006&13.6\\
			&&TT-ICE$^*$& 0.01 &18742&5272&\textbf{470.0}&\textbf{0.008}&\textbf{15.8}\\
			&&ITTD 5& 0.01 &2286&2286&Fails&Fails&Fails\\
		\hline
		\hline
		
		\multirow{3}{*}{Seaquest}
			&\multirow{3}{*}{M1}&TT-ICE& 0.1 &36805&36805&1793.8&0.091&1176.8\\
			&&TT-ICE$^*$& 0.1 &36805&3183&\textbf{173.1}&\textbf{0.096}&\textbf{1703.7}\\
			&&ITTD 5& 0.1 &36805&36805&2435.0&0.096&30.4\\
		\hline
		\multirow{3}{*}{Seaquest}
			&\multirow{3}{*}{M3}&TT-ICE& 0.01 &36805&36805&3907.0&0.007&3.1\\
			&&TT-ICE$^*$& 0.01 &36805&21194&\textbf{2024.6}&\textbf{0.008}&\textbf{3.4}\\
			&&ITTD 5& 0.01 &6667&6667&Fails&Fails&Fails\\
		\hline
		\hline
		
		\multirow{3}{*}{Pong}
			&\multirow{3}{*}{M1}&TT-ICE& 0.1 &96302&96302&1566.0&\textbf{0.066}&\textbf{99835.9}\\
            &                    &TT-ICE$^*$& 0.1 &96302& 2738&\textbf{70.0}&\textbf{0.066}&\textbf{99835.9}\\
            &                 &ITTD 5& 0.1 &96302&96302&1644.6&0.065&2234.6\\
		\hline
		\multirow{3}{*}{Pong}
			&\multirow{3}{*}{M1}&TT-ICE& 0.01 &96302&96302&12895.3&\textbf{0.009}&\textbf{48.3}\\
		    &                    &TT-ICE$^*$& 0.01 &96302&17219&\textbf{1283.2}&\textbf{0.009}&47.2\\
		    &                 &ITTD 5& 0.01 &46015&46015&Fails&Fails&Fails\\
		\hline
		\hline
	\end{tabular}
\label{tab:atariresults}
\end{table}
\section{Comparison with TT-FOA}
\label{app:ttfoacomparison}
This section includes the comparison study between TT-ICE${}^{*}$ and TT-FOA along with its discussion. In regimes where we found TT-FOA effective to run.
We were not able to deploy TT-FOA on our problems of interest because it required significantly greater storage and computational requirements than TT-FOA or ITTD due to large-scale auxiliary matrices. 
For example,  the TT-ranks used to approximate the tensors in \cite{Thanh2021} are very low in comparison to the final TT-ranks that we obtain in our numerical experiments. As a result, we performed a comparison using a similar testbed to that in the TT-FOA paper. To be able to provide a fair comparison between TT-ICE${}^{*}$ and TT-FOA, we implemented the Matlab script provided by authors of \cite{Thanh2021} in python\footnote{The accuracy of the python implementation was verified by comparing the results of our python implementation and the published Matlab script using the same 4-dimensional tensor streams. To time TT-FOA fairly in the following experiments, we removed the relative error computation that was included in the Matlab implementation of the algorithm and computed the relative error separately.} and benchmarked against TT-ICE${}^{*}$ using a synthetic 4-dimensional tensor with size $10\times15\times20\times500$ with TT-ranks $[1,2,3,5,1]$, which is same as the study conducted in \cite{Thanh2021}.

We considered 3 scenarios for TT-FOA: 1) TT-FOA initialized with the true TT-ranks, 2) TT-FOA initialized with underestimated TT-ranks, and 3) TT-FOA initialized with overestimated TT-ranks. On the other hand, for TT-ICE${}^{*}$ we set $\varepsilon_{des}=10^{-8}$ since the synthetic tensor is exactly low rank. We then constructed 20 random 4-dimensional tensor streams having increments of size $10\times15\times20$. 

\Cref{fig:ttfoacomparison} shows the relative error of the approximation at each increment step averaged over 20 repetitions of the experiment. A comparison of the compression ratio of methods is not provided here since the TT-ranks are assumed known from the beginning for TT-FOA.

\begin{figure}[!h]
	\begin{subfigure}{\textwidth}
		\centering
		\includegraphics[]{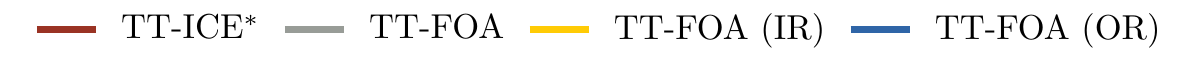}
		%		\caption{test 12345}
	\end{subfigure}
	\begin{subfigure}{\textwidth}
		\centering
		\includegraphics[width=0.5\textwidth]{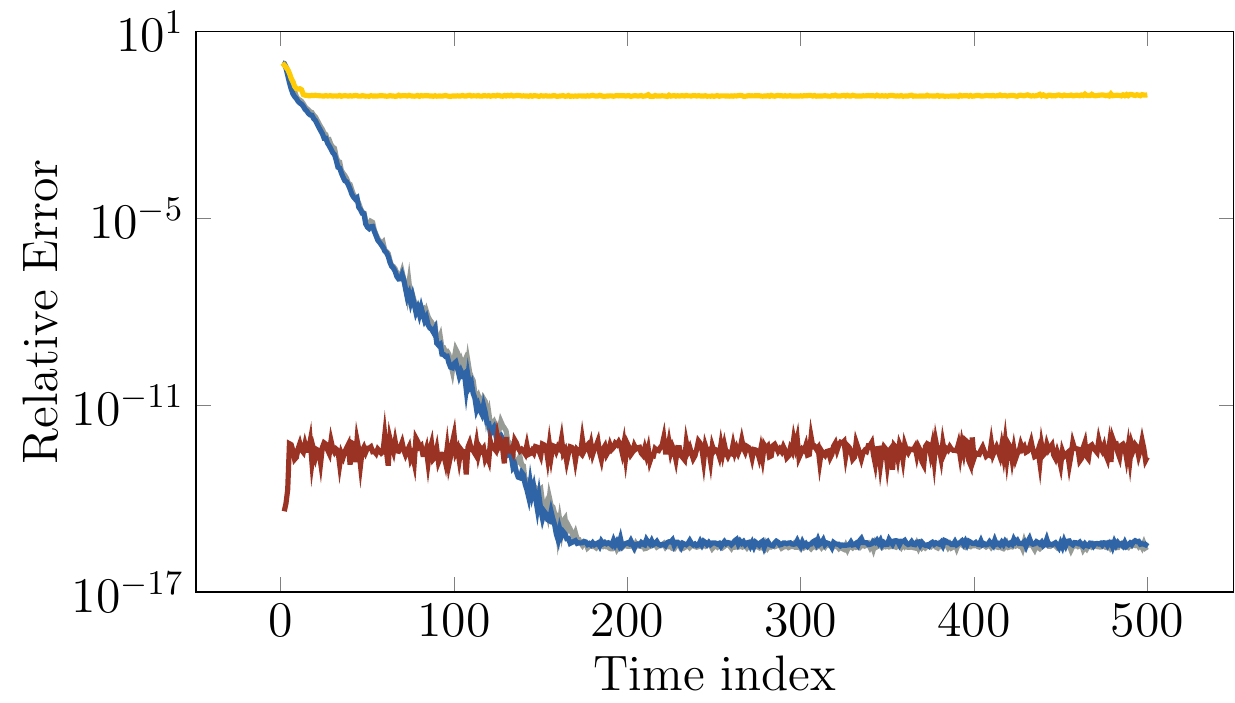}
	\end{subfigure}
	\caption{Comparison of relative errors of TT-ICE${}^{*}$ and TT-FOA\cite{Thanh2021} averaged over 20 repetitions. TT-ICE${}^{*}$: TT-ICE${}^{*}$ with $\varepsilon_{des}=10^{-8}$, TT-FOA: TT-FOA initialized with correct TT-ranks, TT-FOA (IR): TT-FOA initialized with insufficient TT-ranks, TT-FOA (OR): TT-FOA initialized with overestimated TT-ranks. Unlike TT-FOA, TT-ICE${}^{*}$ achieves a low relative error without the need for a convergence period and discovers the correct underlying TT-ranks. In case of underestimated TT-ranks, TT-FOA fails to achieve low relative error.}
	\label{fig:ttfoacomparison}
\end{figure}

\Cref{fig:ttfoacomparison} shows that TT-FOA requires a convergence period before reaching a stable relative error even when it is initialized with correct TT-ranks. In the case of underestimated TT-ranks, TT-FOA converges to a higher relative error depending on the difference in the estimated and actual TT-ranks. TT-FOA converges to the same level of relative error as the correctly estimated case when the TT-ranks are overestimated. This behavior is expected for this set of experiments since we have an exactly low-rank tensor stream. On the other hand, TT-ICE${}^{*}$ neither requires estimation of the TT-ranks beforehand, nor needs a convergence period to achieve the desired level of relative error.

The difference between both methods becomes evident when the time to compress the streams is considered. TT-FOA performs Kronecker products and matrix inversions, and the size of the matrices involved in those operations plays a decisive role in the speed of the algorithm. Therefore, there is a notable difference in computation time between different TT-rank estimations. When the correct TT-ranks are estimated ([1,2,3,5,1]), compression of the streams takes on average $1.8070s$. This time becomes $1.5543s$ when TT-ranks are underestimated as [1,1,2,3,1] and increases to $4.1290s$ when TT-ranks are overestimated as [1,3,5,10,1]. TT-ICE${}^{*}$ compresses the entire stream in $0.5806s$ on average.

When the same experiment is repeated with a tensor of the same size but with TT-ranks [1,4,10,30,1], TT-FOA requires $314.53s$ on average with correct TT-ranks. This time can get as low as $14.965s$ when TT-FOA is initialized with underestimated TT-ranks [1,3,5,20,1] and can get as high as $628.70s$ with overestimated TT-ranks [1,6,15,40,1]. On the other hand, TT-ICE${}^{*}$ scales very well in this scenario and compresses the entire stream in $0.6656s$ on average.

\end{appendices}

\end{document}